\newtheorem{thm}{Theorem}[section]
\newtheorem*{thm*}{Theorem}
\newtheorem{prop}[thm]{Proposition}
\newtheorem{lem}[thm]{Lemma}
\newtheorem{cor}[thm]{Corollary}
\newtheorem*{cor*}{Corollary}
\theoremstyle{definition}
\newtheorem{defn}[thm]{Definition}
\newtheorem{alg}[thm]{Algorithm}
\theoremstyle{remark}
\newtheorem{rmk}[thm]{Remark}
\newtheorem{exm}[thm]{Example}
\numberwithin{equation}{section}
\newcommand{\C}{\mathbb C}
\newcommand{\HH}{\mathbb H}
\newcommand{\Q}{\mathbb Q}
\newcommand{\R}{\mathbb R}
\newcommand{\Z}{\mathbb Z}
\newcommand{\Qbar}{\overline{\mathbb Q}}
\newcommand{\frakd}{\mathfrak{d}}
\newcommand{\frakD}{\mathfrak{D}}
\newcommand{\frakH}{\mathfrak{H}}
\newcommand{\frakp}{\mathfrak{p}}
\newcommand{\calO}{\mathcal{O}}
\newenvironment{enumalg}
{\begin{enumerate}}
{\end{enumerate}}
\newenvironment{enumalgalph}
{\begin{enumerate}}
{\end{enumerate}}
\newenvironment{enumroman}
{\begin{enumerate}}
{\end{enumerate}}
\newenvironment{enumalph}
{\begin{enumerate}}
{\end{enumerate}}
\DeclareMathOperator{\ctr}{ctr}
\DeclareMathOperator{\ext}{ext}
\DeclareMathOperator{\impart}{Im}
\DeclareMathOperator{\init}{in}
\DeclareMathOperator{\inter}{int}
\DeclareMathOperator{\N}{N}
\DeclareMathOperator{\nrd}{nrd}
\DeclareMathOperator{\PSL}{PSL}
\DeclareMathOperator{\PSU}{PSU}
\DeclareMathOperator{\rad}{rad}
\DeclareMathOperator{\invrad}{invrad}
\DeclareMathOperator{\reduc}{red}
\DeclareMathOperator{\sgn}{sgn}
\DeclareMathOperator{\SL}{SL}
\DeclareMathOperator{\SU}{SU}
\DeclareMathOperator{\Tr}{Tr}
\DeclareMathOperator{\trd}{trd}
\newcommand{\la}{\langle}
\newcommand{\ra}{\rangle}
\newcommand{\quat}[2]{\displaystyle{\biggl(\frac{#1}{#2}\biggr)}}
\begin{document}

\title[Computing fundamental domains]{Computing fundamental domains \\ for Fuchsian groups}

\author[John {\sc Voight}]{{\sc John} Voight}
\address{John {\sc Voight}\\
Department of Mathematics and Statistics\\ 
16 Colchester Avenue\\ 
University of Vermont\\ 
Burlington, Vermont 05401-1455 \\ 
USA}
\email{jvoight@gmail.com}
\urladdr{http://www.cems.uvm.edu/\~{}voight/}



\maketitle

\begin{resume}
Nous exposons un algorithme pour calculer un domaine de Dirichlet pour un Fuchsian groupe $\Gamma$ avec aire cofinis. En cons\'equence, nous calculons les invariants de $\Gamma$ et une pr\'esentation explicite finis pour $\Gamma$.
\end{resume}

\begin{abstr}
We exhibit an algorithm to compute a Dirichlet domain for a Fuchsian group $\Gamma$ with cofinite area.  As a consequence, we compute the invariants of $\Gamma$, including an explicit finite presentation for $\Gamma$. 
\end{abstr}

\bigskip

Let $\Gamma \subset \PSL_2(\R)$ be a \emph{Fuchsian group}, a discrete group of orientation-preserving isometries of the upper half-plane $\frakH$ with hyperbolic metric $d$.  A \emph{fundamental domain} for $\Gamma$ is a closed domain $D \subset \frakH$ such that:
\begin{enumroman}
\item $\Gamma D = \frakH$, and 
\item $g D^o \cap D^o = \emptyset$ for all $g \in \Gamma \setminus \{1\}$, where ${}^o$ denotes the interior.  
\end{enumroman}
Assume further that $\Gamma$ has cofinite area, i.e., the coset space $X=\Gamma \backslash \frakH$ has finite hyperbolic area $\mu(X)<\infty$; then it follows that $\Gamma$ is finitely generated.

In this article, we exhibit an algorithm to compute a fundamental domain for $\Gamma$; we assume that $\Gamma$ is specified by a finite set of generators $G \subset \SL_2(K)$ with $K \hookrightarrow \R \cap \overline{\Q}$ a number field, and we call $\Gamma$ \emph{exact}.  Suppose that $p \in \frakH$ has trivial stabilizer $\Gamma_p=\{1\}$.  Then the set
\[ D(p)=\{z \in \frakH : d(z,p) \leq d(g z,p) \text{ for all $g \in \Gamma$}\}, \]
known as a \emph{Dirichlet domain}, is a hyperbolically convex fundamental domain for $\Gamma$.  The boundary of $D(p)$ consists of finitely many geodesic segments or \emph{sides}.  We specify $D(p)$ by a sequence of vertices, oriented counterclockwise around $p$.  The domain $D(p)$ has a natural \emph{side pairing}: For each side $s$ of $D(p)$, there exists a unique side $s^*$ and $g \in \Gamma \setminus \{1\}$ such that $s^*=g s$, and the set of such $g$ comprises a set of generators for $\Gamma$.

Our main theorem is as follows.

\begin{thm*}
There exists an algorithm which, given an exact Fuchsian group $\Gamma$ with cofinite area and a point $p \in \frakH$ with $\Gamma_p=\{1\}$, returns the Dirichlet domain $D(p)$, a side pairing for $D(p)$, and a finite presentation for $\Gamma$ with a minimal set of generators.
\end{thm*}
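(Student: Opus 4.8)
The plan is to build the domain by successive approximation. For a finite subset $S \subset \Gamma$, I would work with the \emph{partial Dirichlet domain}
\[
D_S(p) = \{z \in \frakH : d(z,p) \le d(gz,p) \text{ for all } g \in S\},
\]
the intersection of the half-planes $H_g(p) = \{z : d(z,p) \le d(gz,p)\}$, each bounded by the perpendicular bisector of $p$ and $g^{-1}p$ --- a geodesic whose endpoints are algebraic functions of the entries of $g$. Since $\Gamma$ is exact, this geodesic, and all the incidence tests, angle computations and distance comparisons that follow, stay inside a fixed number field and are therefore effective; so $D_S(p)$ is an explicitly computable convex hyperbolic polygon with finitely many sides (possibly with ideal vertices), and it is the closure of its interior since it is convex and contains $p$ in its interior (as $p$ is fixed by no nontrivial element of $\Gamma$). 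Because $S \subseteq \Gamma$ imposes only some of the constraints cutting out $D(p)$, we have $D(p) \subseteq D_S(p)$; and since $D(p)$ has finitely many sides there is a finite $S_0 \subset \Gamma$ with $D_{S_0}(p) = D(p)$. So the algorithm would enumerate $\Gamma$ --- for instance through words in the given generators $G$, processed in order of increasing $d(gp,p)$ --- accumulating the elements into a growing set $S$ and recomputing $D_S(p)$ at each stage.

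The crux is a stopping criterion that is simultaneously \emph{checkable} and \emph{sound}. At each stage I would compute the side-pairing structure of $D_S(p)$: for each side $s$, lying on the bisector of $p$ and $g^{-1}p$ with $g \in S$, test whether $g(s)$ is again a side of $D_S(p)$; if every side pairs up this way, collect the side-pairing set $\Gamma_S \subseteq S$. Next I would check the hypotheses of Poincar\'e's polygon theorem --- that each elliptic vertex cycle has angle sum a submultiple of $2\pi$ and that the cycle transformation at each ideal vertex is parabolic. When these hold, Poincar\'e's theorem certifies that $D_S(p)$ is a fundamental domain for $\Delta := \langle \Gamma_S \rangle$ and yields a finite presentation of $\Delta$ on the generators $\Gamma_S$ with the cycle words as relators. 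Using this fundamental domain I would then decide, for each given generator $g \in G$, whether $g \in \Delta$, by reducing $gp$ into $D_S(p)$ by repeated side-pairings; this reduction terminates because the distance from the running point to $p$ strictly decreases along its steps and the orbit $\Gamma p$ is discrete, and $g \in \Delta$ precisely when the reduced point equals $p$. If every $g \in G$ turns out to lie in $\Delta$, I would halt; otherwise enlarge $S$ and repeat.

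To see that this is correct: if the algorithm halts then $G \subseteq \Delta$ while $\Delta \subseteq \Gamma$ (since $\Gamma_S \subseteq S \subseteq \Gamma$), so $\Delta = \Gamma$ and $D_S(p)$ is a fundamental domain for $\Gamma$; as $D(p) \subseteq D_S(p)$ and both are fundamental domains for $\Gamma$ of the same finite area, while $D_S(p)$ equals the closure of its interior, equality $D_S(p) = D(p)$ follows. The Poincar\'e presentation of $\Delta = \Gamma$ is returned, and I would then apply Tietze transformations (equivalently, the classical normalization of the fundamental polygon) to bring it to the standard presentation of a cofinite Fuchsian group, from which the signature and a minimal set of generators can be read off. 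Termination is guaranteed because once $S$ contains the finitely many side-pairing elements of the true domain together with their inverses we have $D_S(p) = D(p)$, whose natural side pairing is complete, satisfies all the cycle conditions, and has side-pairing elements generating $\Gamma \supseteq G$ --- so the stopping criterion fires.

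The hard part, I expect, will be the interaction of this stopping criterion with the possible non-compactness of $D(p)$: when $\Gamma$ contains parabolics, $D(p)$ has ideal vertices and infinite diameter, so one cannot simply bound the elements of $\Gamma$ relevant to the domain by its size. Handling this requires making Poincar\'e's theorem --- in particular the parabolic cycle condition at each cusp --- genuinely effective and its hypotheses decidable; showing that the reduction algorithm behind the membership test $g \in \Delta$ still terminates near a cusp; and bounding, in terms of the finite vertices of the current polygon, which further elements can still cut it, so that the enumeration provably reaches the stabilizing set $S_0$. That analysis, together with the bookkeeping needed to pass from an arbitrary Poincar\'e presentation to the standard one with a minimal generating set, is where the substantive work lies; the remaining steps are the routine exact-arithmetic manipulations of geodesics and polygons mentioned above.
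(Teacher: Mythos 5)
Your outline---grow a finite set $S$, form the partial Dirichlet domain cut out by the bisectors, halt when a complete side pairing certifies a fundamental domain, and read a presentation off the vertex cycles---has the same skeleton as the paper's proof, and your membership test (reduce $gp$ into the candidate domain and check that it lands on $p$) is exactly the paper's Proposition \ref{normbas} and Remark \ref{wordprob}. But the two steps you defer are precisely where the substance lies, and the paper's argument differs exactly there. Your certification rests on an effective Poincar\'e theorem with decidable hypotheses, including the parabolic cycle condition at ideal vertices; you acknowledge this is ``where the substantive work lies'' but do not supply it, so as written this is a genuine gap. The paper is organized to avoid it: since the candidate domains are exterior domains of isometric circles (equivalently, bisector polygons centered at $p$), Proposition \ref{sidepair} (via Beardon, Theorem 9.8.4, with the finite-area hypothesis taking care of vertices at infinity) says that a complete side pairing \emph{alone} certifies a fundamental domain for the group generated---the algorithm never checks angle sums or cusp conditions. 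Nor does the paper need your membership test in the halting criterion: Algorithm \ref{algD} only modifies $G$ in ways that preserve $\la G\ra$, so the side-pairing elements generate $\Gamma$ by construction. Relatedly, the paper does not enumerate $\Gamma$ blindly; new elements are produced by the reduction algorithm (Algorithm \ref{algred}) applied at unpaired vertices, which provably shrinks the exterior domain at every pass---this is what drives the termination proof and makes the ``bound which elements can still cut the polygon'' analysis you worry about unnecessary (your own word-length enumeration also does not need it for bare termination). A small further caution: at intermediate stages your test ``$g\in\Delta$ precisely when the reduced point equals $p$'' is sound only in the forward direction, since $\gamma p$ with $\gamma\in\Delta$, $\gamma^{-1}\notin S$, can lie on the boundary of $D_S(p)$; this is harmless for halting but the ``precisely'' is too strong.

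The second deferred step is minimality of the generating set. You propose Tietze transformations or the classical normalization of the polygon; the paper instead chooses $p$ generically so that, by Proposition \ref{areazero}, elliptic and parabolic cycles have length $1$ and accidental cycles length $3$, computes minimal cycles (Algorithm \ref{mincycles}), and eliminates generators by back substitution (Algorithm \ref{minrelations}); the nontrivial point that the elimination never gets stuck is Lemma \ref{freeproduct}, a free-product/homology argument. The paper explicitly does \emph{not} normalize to the canonical presentation (\ref{gens})--(\ref{relats}), referring to Imbert for that; it only counts relations to conclude minimality. So your route could plausibly be completed, but it outsources its two key steps (decidable Poincar\'e hypotheses, especially at cusps, and the minimality argument), whereas the paper's choice of certificate and of a generic center $p$ is exactly what renders those steps unnecessary or elementary.
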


This algorithm also provides a solution to the word problem for the computed presentation of $\Gamma$.  

Of particular and relevant interest is the class of \emph{arithmetic Fuchsian groups}, those groups commensurable with the group of units $\calO_1^*$ of reduced norm $1$ in a maximal order $\calO$ of a quaternion algebra $B$ defined over a totally real field and split at exactly one real place.  Alsina-Bayer \cite{AB} and Kohel-Verrill \cite{KV} give several examples of fundamental domains for arithmetic Fuchsian groups with $F=\Q$.  Our work generalizes that of Johansson \cite{Johansson}, who first made use of a Dirichlet domain for algorithmic purposes: he restricts to the case of arithmetic Fuchsian groups, and we improve on his methods in several respects (see the discussion preceding Algorithm \ref{algdomG} and the reduction algorithms in \S 4).

The algorithm described in the above theorem has the following applications.  The first is a noncommutative generalization of the problem of computing generators for the unit group of a number field.

\begin{cor*}
There exists an algorithm which, given an order $\calO \subset B$ of a quaternion algebra $B$ defined over a totally real field and split at exactly one real place, returns a finite presentation for $\calO_1^*$ with a minimal set of generators.
\end{cor*}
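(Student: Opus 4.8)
The plan is to derive the Corollary from the main Theorem by realizing $\calO_1^*$, modulo its central subgroup $\{\pm 1\}$, as an exact Fuchsian group of cofinite area. Using known algorithms one computes a standard presentation $B \cong \quat{a,b}{F}$ with generators $i,j$ satisfying $i^2=a$, $j^2=b$, $ji=-ij$, arranged so that $a$ is positive at the unique real place $v$ of $F$ at which $B$ splits, together with a $\Z$-basis of $\calO$. Then $v$ extends to a splitting $\iota=\iota_v\colon B\hookrightarrow M_2(\R)$ taking values in $M_2(K)$, where $K=F(\sqrt a)\hookrightarrow\R$ adjoins the positive square root (explicitly $i\mapsto\left(\begin{smallmatrix}\sqrt a&0\\0&-\sqrt a\end{smallmatrix}\right)$ and $j\mapsto\left(\begin{smallmatrix}0&1\\b&0\end{smallmatrix}\right)$), and $\iota$ carries the reduced norm to the determinant, so $\iota(\calO_1^*)\subset\SL_2(K)$. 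Since $B$ is ramified at every real place of $F$ other than $v$ — where its unit-norm group is compact — the image $\iota(\calO_1^*)$ is discrete in $\SL_2(\R)$, hence its image $\Gamma$ in $\PSL_2(\R)$ is a Fuchsian group, and $\Gamma\cong\calO_1^*/\{\pm 1\}$; by Shimizu's theorem $\Gamma$ has finite covolume, with $\mu(\Gamma\backslash\frakH)$ given by an explicit formula (in terms of $\zeta_F(2)$ and the discriminants of $F$ and $B$, with a local correction when $\calO$ is not maximal), hence effectively computable.

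It remains to supply the inputs required by the Theorem. Because $\calO$ is a rank-$4$ $\Z$-lattice on which the reduced norm and all archimedean embeddings of $B$ are explicit, one can enumerate, for each bound $N$, the finitely many $\gamma\in\calO$ with $\nrd(\gamma)=1$ of archimedean size at most $N$; the Corollary's algorithm interleaves this enumeration with the fundamental-domain construction underlying the Theorem, producing a nondecreasing sequence of partial Dirichlet domains whose areas are bounded above by $\mu(\Gamma\backslash\frakH)$, and halts — with its output certified correct — once the computed area equals $\mu(\Gamma\backslash\frakH)$, at which point the elements produced generate $\Gamma$. Thus $\Gamma$ is exact, with an explicit generating set in $\SL_2(K)$. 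A point $p\in\frakH$ with $\Gamma_p=\{1\}$ is easily exhibited, as the excluded $p$ form a countable set (the fixed points of elliptic elements of $\Gamma$) and a candidate can be tested directly. The Theorem now returns $D(p)$, a side pairing, and a minimal finite presentation $\la g_1,\dots,g_r\mid r_1,\dots,r_m\ra$ of $\Gamma$.

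Finally I descend to $\calO_1^*$. Since $-1\in\calO_1^*$, reduction is a central extension $1\to\{\pm 1\}\to\calO_1^*\to\Gamma\to 1$; choosing lifts $\tilde g_k\in\calO_1^*\subset B$ of the $g_k$ (read off from the side-pairing matrices) and evaluating each relator word $r_\ell(\tilde g)$ by exact arithmetic in $B$, one obtains $r_\ell(\tilde g)=(-1)^{\epsilon_\ell}$ with $\epsilon_\ell\in\{0,1\}$ computable. Then $\calO_1^*$ is presented by $\tilde g_1,\dots,\tilde g_r$ together with $z=-1$ and the relations $z^2=1$, $z\tilde g_k=\tilde g_k z$, and $r_\ell(\tilde g)=z^{\epsilon_\ell}$; a minimal generating set is obtained from this presentation by the further (finite, $\F_2$-linear-algebra) check of whether $-1$ can be omitted, i.e.\ whether $-1$ lies in $\la\tilde g_1,\dots,\tilde g_r\ra$ for a suitable choice of lifts. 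I expect the main obstacle to be the effective certification of exactness: proving that ``enumerate reduced-norm-one elements until the partial domain attains the covolume'' is a correct termination criterion, which relies on Shimizu's volume formula and on the monotonicity of the areas of the partial domains built by the Theorem's algorithm — and, with it, on handling the non-cocompact case $B\cong M_2(\Q)$, in which $\Gamma$ has cusps. The central-extension bookkeeping and the choice of $p$ are routine by comparison.
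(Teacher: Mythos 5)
Your proposal follows essentially the same route as the paper: embed $B$ into $M_2(K)$ with $K=F(\sqrt{h})$ real, as in (\ref{embedmin}), so that $\Gamma(\calO)=\iota_\infty(\calO_1^*/\{\pm 1\})$ is an exact Fuchsian group; compute its coarea from Shimizu's formula (\ref{shimizu}) and Remark \ref{areacompute}; and interleave enumeration of norm-one elements of small archimedean size (the positive definite absolute reduced norm form together with Fincke--Pohst, as in Algorithm \ref{enumlll}) with the domain construction, halting when the area of the computed domain equals the coarea --- this is Algorithm \ref{algDO}, and the main theorem then delivers the minimal presentation. The step you single out as the main obstacle is easier than you fear, but your justification has the inequality backwards: for any finite $G \subset \Gamma \setminus \{1\}$ the exterior domain $\ext(G)$ \emph{contains} the Ford domain $\overline{D(0)}$ (Proposition \ref{forddom}(a)), so its area is bounded \emph{below} by $\mu(\Gamma\backslash\frakH)$ and is nonincreasing as $G$ grows; equality of areas forces $\ext(G)=\overline{D(0)}$, and the elements whose isometric circles support its sides lie in $G$ and generate $\Gamma$ by Proposition \ref{sidepair}. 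In particular no separate argument is needed for $B \cong M_2(\Q)$: vertices at infinity and parabolic cycles are already built into the side-pairing and cycle machinery of \S\S 1 and 5. (Two minor slips: $\calO$ has $\Z$-rank $4n$, not $4$, though bounding all archimedean embeddings still gives finite search sets; and the index $[\Gamma^B(1):\Gamma(\calO)]$ entering (\ref{shimizu}) for a non-maximal order is treated no more explicitly in the paper than in your sketch.) Your final step, lifting the presentation of $\Gamma$ to $\calO_1^*$ through the central extension by $\{\pm 1\}$ and evaluating the relator signs exactly in $B$, is carried out more carefully than in the paper, which identifies matrices with their images in $\PSL_2(\R)$ throughout and reads the corollary directly off the theorem; that bookkeeping is a legitimate, if routine, supplement if one insists on presenting $\calO_1^*$ itself rather than $\calO_1^*/\{\pm 1\}$.
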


We may also use the presentation for $\Gamma$ to compute invariants.  The group $\Gamma$ has finitely many orbits with nontrivial stabilizer, known as \emph{elliptic cycles} or \emph{parabolic cycles} according as the stabilizer is finite or infinite.  The coset space $X=\Gamma \setminus \frakH$ can be given the structure of a Riemann surface, and we say that $\Gamma$ has \emph{signature} $(g;m_1,\dots,m_t;s)$ if $X$ has genus $g$ and $\Gamma$ has exactly $t$ elliptic cycles of orders $m_1,\dots,m_t \in \Z_{\geq 2}$ and $s$ parabolic cycles.

\begin{cor*}
There exists an algorithm which, given $\Gamma$, returns the signature of $\Gamma$ and a set of representatives for the elliptic and parabolic cycles in $\Gamma$.
\end{cor*}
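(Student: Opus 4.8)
The plan is to leverage the Dirichlet domain $D(p)$ and its side pairing, both of which are produced by the main theorem, to read off the signature and cycle representatives directly from the combinatorial structure of $D(p)$. The key observation is that the vertices of $D(p)$, under the side-pairing identifications, fall into finitely many orbits, and these orbits correspond exactly to the elliptic and parabolic cycles of $\Gamma$: a finite-angle vertex cycle whose angles sum to $2\pi/m$ is elliptic of order $m$, while an ideal vertex on the boundary $\partial\frakH$ (an infinite-angle cusp) is a parabolic cycle.

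First I would run the main algorithm to obtain $D(p)$ as a counterclockwise sequence of vertices $v_1,\dots,v_n$ together with the side pairing $s\mapsto s^*$ given by explicit elements $g_s\in\Gamma\subset\SL_2(K)$. Next I would compute the vertex cycles: starting from a vertex $v$, apply the side-pairing transformation associated with one of the two sides meeting at $v$ to move to the paired vertex, iterate, and record the cycle until it closes up; repeating over all vertices partitions them into finitely many cycles. For each finite vertex cycle I would sum the interior angles of $D(p)$ at the vertices of the cycle — each angle is computable exactly since the vertices and sides are defined over $K$ — and divide $2\pi$ by this sum to obtain the order $m_i$ (this is the standard consequence of the Poincaré polygon theorem, and $m_i\ge 2$ exactly when the cycle is genuinely elliptic rather than an ordinary interior identification with angle sum $2\pi$). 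For each ideal vertex cycle I would record one representative cusp and increment the count $s$ of parabolic cycles; the product of side-pairing elements around such a cycle is the parabolic generator fixing that cusp, which I would also return. Finally, the genus $g$ follows from the Euler characteristic computation: the quotient $X=\Gamma\backslash\frakH$ is a CW-complex with one face, $n/2$ edges (sides are paired), and one vertex per vertex cycle, and together with the orbifold Euler characteristic relation $\mu(X)/(2\pi) = 2g-2+\sum_i(1-1/m_i)+s$ — or, more elementarily, by solving for $g$ from $2-2g = V - E + F$ after accounting for which vertex cycles are cusps — one recovers $g$.

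The main obstacle is the exact computation and comparison of hyperbolic angles at the vertices: the angle sum at a vertex cycle must be compared with $2\pi$ and with $2\pi/m$ for integer $m$, and this requires certifying an equality of the form $\sum_j \theta_j = 2\pi/m$ where the $\theta_j$ are angles between geodesics with endpoints in $K$. I would handle this by working with the tangent directions of the geodesics at each vertex, which are algebraic over $K$, and reducing the angle-sum identity to an algebraic relation among these directions that can be verified exactly; since we know a priori (from the fact that $D(p)$ is a fundamental domain of a discrete group) that the angle sum at each cycle is of the form $2\pi/m$ for some positive integer $m$, it suffices to compute a sufficiently accurate numerical approximation to isolate $m$, and then optionally certify it algebraically. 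A secondary subtlety is correctly detecting which vertices of $D(p)$ are ideal (lie on $\partial\frakH$) versus finite; this is a decidable sign condition on coordinates in $K$, so it poses no real difficulty, but it must be done before the angle computations since cusps contribute no finite angle.

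With the vertex cycles, their orders, the cusp count, and the genus in hand, the algorithm outputs the signature $(g;m_1,\dots,m_t;s)$ together with a representative point in $\frakH$ (resp. in $\partial\frakH$) for each elliptic (resp. parabolic) cycle and, for the elliptic cycles, an explicit elliptic element of $\Gamma$ generating the stabilizer, obtained as the appropriate product of side-pairing transformations around the cycle. \qed
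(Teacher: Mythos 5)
Your proposal is correct, but it reaches the signature by a genuinely different route than the paper. You classify each vertex cycle geometrically: angle sums equal to $2\pi/m$ detect elliptic cycles of order $m$ (with $m=1$ the accidental cycles), ideal vertices detect parabolic cycles, and the genus comes from an Euler characteristic count $2-2g=V-E+F$ with $F=1$, $E=n/2$, $V$ the number of vertex cycles. The paper instead works algebraically with the cycle word $g=g_n\cdots g_1 \in \SL_2(K)$ attached to each minimal cycle: the cycle is parabolic exactly when $\trd(g)=\pm 2$, elliptic when $g$ has finite order (computable exactly from the trace, which lies in $K$), and accidental when $g=1$; it then invokes the genericity statement of Beardon (Proposition \ref{areazero}, so elliptic and parabolic cycles have length $1$ and accidental cycles length $3$), performs back-substitution on the accidental relations (Algorithm \ref{minrelations}, using Lemma \ref{freeproduct} to rule out getting stuck), and reads the signature off the resulting minimal presentation via the structure theory (\ref{gens})--(\ref{relats}). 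Your approach buys independence from the genericity of $p$ and from the presentation machinery, and it is more elementary; its cost is that the angle comparisons are not literally ``defined over $K$'' (the conjugation to $\frakD$ involves the arbitrary base point $p$), so exactness rests on your fallback argument that the angle sum is known a priori to lie in the discrete set $\{2\pi/m\}$ and can therefore be pinned down by sufficiently precise approximation---a valid but more delicate certification than the paper's trace test, which you could in fact substitute directly since you already compute the cycle word as the stabilizer generator. The paper's route has the additional advantage that the same computation simultaneously produces the minimal presentation required for the main theorem, whereas yours produces the signature and cycle representatives but not the presentation.
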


Finally, we mention a corollary which is useful for the evaluation of automorphic forms. 

\begin{cor*}
There exists an algorithm which, given $\Gamma$ and $z,p \in \frakH$ with $\Gamma_p=\{1\}$, returns a point $z' \in D(p)$ and $g \in \Gamma$ such that $z'=g(z)$.
\end{cor*}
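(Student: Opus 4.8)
The plan is to first apply the Main Theorem to compute $D(p)$ together with a side pairing, and then to \emph{reduce} $z$ into $D(p)$ by a greedy descent of the function $g \mapsto d(gz,p)$ along the side-pairing generators. Let $g_1,\dots,g_n \in \Gamma$ be the side-pairing elements returned by the Main Theorem and put $T = \{g_1^{\pm1},\dots,g_n^{\pm1}\}$. Each side of $D(p)$ lies on the perpendicular bisector of a segment $[p,gp]$ with $g \in T$, and $D(p)$ is exactly the intersection of the finitely many half-planes $H_g = \{w \in \frakH : d(w,p) \le d(gw,p)\}$ for $g \in T$; consequently $w \in D(p)$ if and only if $d(w,p) \le d(tw,p)$ for every $t \in T$, and whenever $w \notin D(p)$ some $t \in T$ satisfies $d(tw,p) < d(w,p)$.

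Given this, the algorithm is the obvious one: set $z_0 = z$, $h_0 = 1$; at step $k$, compute $d(tz_k,p)$ for all $t \in T$; if each is $\ge d(z_k,p)$, return $z' = z_k$ and $g = h_k$; otherwise choose $t \in T$ minimizing $d(tz_k,p)$ and replace $(z_k,h_k)$ by $(tz_k, t h_k)$. Correctness is immediate from the displayed equivalence. For termination, I would note that the distances $d(z_k,p)$ strictly decrease and all lie in the set $\{d(gz,p) : g \in \Gamma,\ d(gz,p) \le d(z,p)\}$, which is finite because $\Gamma$ acts properly discontinuously; hence the loop halts, and since $D(p)$ is a fundamental domain (so $\Gamma D(p) = \frakH$) it must halt with $z_k \in D(p)$. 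Effectivity is clear: each step requires only comparisons of $d(tz_k,p)$ with $d(z_k,p)$, which via $\cosh d(v,w) = 1 + |v-w|^2/(2\impart v\, \impart w)$ amount to comparisons of real algebraic numbers built from the entries of the $g_i$ and the (assumed algebraic) coordinates of $p$ and $z$, and these are decidable.

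The one point that needs genuine care --- and the place I expect the real content to lie --- is the reduction step's completeness: namely that $T$ alone detects membership in $D(p)$, even though $D(p)$ is a priori cut out by the infinitely many half-planes $H_g$ over all $g \in \Gamma$. This is precisely the structural fact that the sides of $D(p)$ are in bijection with a finite subset of $\Gamma$ and that $D(p)$ equals the intersection of the associated half-planes, which is already part of what the Main Theorem computes, so no new argument is required beyond citing it. A secondary, easily dispatched worry is that the greedy descent might stall at a point outside $D(p)$; but the equivalence above rules this out, since outside $D(p)$ some $t \in T$ strictly decreases the distance to $p$.
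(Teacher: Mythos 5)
Your proposal is correct and is essentially the paper's own route: your greedy descent over the side-pairing elements $T$, minimizing $d(tz_k,p)$ and stopping when no $t\in T$ decreases the distance, is exactly the reduction procedure $\reduc_G(\,\cdot\,;z)$ of Algorithm \ref{algred} applied with the normalized basis $G$ produced by the main theorem, and your ``displayed equivalence'' is Remark \ref{rmkgred} (an element is $(G,z)$-reduced if and only if it carries $z$ into $\ext(G)=D(p)$), which holds because the output basis satisfies $\ext(G)=D(p)$. Your termination argument via strict decrease of $d(z_k,p)$ and discreteness of the orbit is also the paper's.
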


The article is organized as follows.  We begin by fixing notation and discussing the necessary background from the theory of Fuchsian groups (\S 1--2).  We then treat arithmetic Fuchsian groups and give methods for enumerating ``small'' elements of the group $\calO_1^*$, with $\calO \subset B$ a quaternion order as above (\S 3).  Next, we describe the basic algorithm to reduce an element $g \in \Gamma$ with respect to a finite set $G \subset \Gamma$ (\S 4).  We then prove the main theorem (\S 5) and conclude by giving two examples (\S 6).

The author would like to thank the Magma group at the University of Sydney for their hospitality, Steve Donnelly and David Kohel for their helpful input, and Stefan Lemurell for his careful reading of the paper.

\section{Fuchsian groups}

In this section, we present the relevant background from the theory of Fuchsian groups; suggested references include Katok \cite[Chapters 3--4]{Katok} and Beardon \cite[Chapter 9]{Beardon}.  Throughout, we let $\Gamma \subset \PSL_2(\R)$ denote a  Fuchsian group with cofinite area, which is finitely generated by a result of Siegel \cite[Theorem 4.1.1]{Katok}, \cite[\S 1]{Gelfand}.  To simplify, we will identify a matrix $g \in \SL_2(\R)$ with its image in $\PSL_2(\R)$.  

Throughout this section, let $p \in \frakH$ be a point with trivial stabilizer $\Gamma_p=\{1\}$.  Almost all points $p$ satisfy this property: there exist only finitely many $p$ with $\Gamma_p \neq \{1\}$ in any compact subdomain of $\frakH$, and in particular, the set of $p \in \frakH$ with $\Gamma_p \neq \{1\}$ have area zero.  In practice, with probability $1$ a ``random'' choice of $p$ will suffice.

We define the \emph{Dirichlet domain} centered at $p$ to be
\[ D(p)=\{z \in \frakH:d(z,p) \leq d(g z,p) \text{ for all $g \in \Gamma$}\}. \]
The set $D(p)$ is a fundamental domain for $\Gamma$, and is a \emph{hyperbolic polygon}.  More generally, we define a \emph{generalized hyperbolic polygon} to be a closed, connected, and hyperbolically convex domain whose boundary consists of finitely many geodesic segments, called \emph{sides}, so that a hyperbolic polygon is a generalized hyperbolic polygon with finite area.

Let $D \subset \frakH$ be a hyperbolic polygon.  Let $S=S(D)$ denote the set of sides of $D$, with the following convention: if $g \in \Gamma$ is an element of order $2$ which fixes a side $s$ of $D$, and $s$ contains the fixed point of $g$, we instead consider $s$ to be the union of two sides meeting at the fixed point of $g$.  We define a labeled equivalence relation on $S$ by
\[ P=\{(g,s,s^*) : s^*=g(s)\} \subset \Gamma \times (S \times S). \]
We say that $P$ is a \emph{side pairing} for $D$ if $P$ induces a partition of $S$ into pairs, and we denote by $G(P)$ the projection of $P$ to $\Gamma$.  

\begin{prop} \label{sidepair}
The Dirichlet domain $D(p)$ has a side pairing $P$, and the set $G(P)$ generates $\Gamma$.  Conversely, let $D \subset \frakH$ be a hyperbolic polygon, and let $P$ be a side pairing for $D$.  Then $D$ is a fundamental domain for the group generated by $G(P)$.
\end{prop}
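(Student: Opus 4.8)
The plan is to prove the two halves of Proposition~\ref{sidepair} separately, using the convexity of the Dirichlet domain for the first half and the Poincaré polygon theorem (or a direct covering-space argument) for the second. For the first half, I would start from the observation that for each $g \in \Gamma \setminus \{1\}$ the locus
\[ L_g = \{z \in \frakH : d(z,p) = d(gz,p)\} \]
is a geodesic (the perpendicular bisector of the segment from $p$ to $g^{-1}p$), and $D(p)$ is the intersection of the closed half-planes $H_g = \{z : d(z,p) \le d(gz,p)\}$. A compactness/local finiteness argument — using that $\Gamma$ acts properly discontinuously, so only finitely many $g$ move $p$ within any bounded distance — shows that only finitely many of the $L_g$ actually meet the boundary, hence $D(p)$ is a hyperbolic polygon with finitely many sides. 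Then for a side $s$ lying on $L_g$, I would check that $g(s)$ lies on $L_{g^{-1}}$ and is a side of $D(p)$: this is the standard fact that $g$ maps $D(p)$ to $D(gp)$ and the two domains share exactly the edge along the bisector. The element realizing $s \mapsto s^*$ is unique because a side determines the bisector $L_g$, which determines $g^{-1}p$, which determines $g$ (as $\Gamma_p = \{1\}$); the involution $s \mapsto s^*$ is then built from $g \mapsto g^{-1}$, with the order-$2$ subdivision convention handling the case where $s^* = s$. Finally, that $G(P)$ generates $\Gamma$ follows by a connectedness argument on the tessellation $\{g D(p) : g \in \Gamma\}$: let $\Gamma'$ be the subgroup generated by $G(P)$, note that $\Gamma' D(p)$ is both open and closed in $\frakH$ (open because crossing any side of a translate $gD(p)$ stays in $\Gamma' D(p)$ by the side-pairing relations, closed by local finiteness), hence equals $\frakH$, and conclude $\Gamma' = \Gamma$ since $D(p)$ is a fundamental domain.

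For the converse, I would invoke the Poincaré polygon theorem: given an abstract hyperbolic polygon $D$ with a side pairing $P$, the group $\Gamma_0 = \langle G(P) \rangle$ acts on $\frakH$ with $D$ as fundamental domain provided the side-pairing transformations satisfy the cycle conditions at the vertices (the sum of angles in each elliptic cycle divides $2\pi$, and parabolic cycles close up correctly). The point is that when $D$ arises as a sub-polygon of $\frakH$ with an \emph{already given} embedding and the pairing transformations lie in a known discrete group, these cycle conditions are automatically satisfied: going around a vertex cycle, the composition of side-pairing maps is an element of $\Gamma$ fixing that vertex, hence lies in the (finite or parabolic) stabilizer, and the angle sum works out to $2\pi/k$ for the appropriate $k$. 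So the substance reduces to checking the hypotheses of Poincaré's theorem and then reading off that $D$ tessellates $\frakH$ under $\Gamma_0$, i.e., $D$ is a fundamental domain for $\Gamma_0$.

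I expect the main obstacle to be the first half — specifically, establishing that $D(p)$ has only finitely many sides and that the side-pairing map is well-defined as a genuine involution on $S(D)$ once the order-$2$ subdivision convention is in force. The finiteness is where cofiniteness of $\Gamma$ (equivalently, finite area of $X$) really enters: without it $D(p)$ could be a generalized polygon with infinitely many sides, so I would need the local finiteness of the tessellation together with the finite-area hypothesis to bound the number of sides. The involution being well-defined is a bookkeeping point but genuinely needs the convention: if $g$ has order $2$ and its fixed point lies in the interior of what would naively be one side $s$, then $g$ maps each half of $s$ to the other, and only after subdividing at the fixed point does $s \mapsto s^*$ pair sides rather than map a side to itself reversing orientation. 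I would handle this by treating that case explicitly when verifying that $P$ induces a partition of $S$ into pairs. The generation statement and the converse are then comparatively routine, modulo a careful citation of Poincaré's polygon theorem in the form stated in Beardon or Katok.
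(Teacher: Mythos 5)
Your first half is the standard argument (perpendicular bisectors, local finiteness from proper discontinuity, finiteness of sides from cofinite area, and the open--closed argument on the tessellation for generation); this is precisely the content of the results the paper simply cites (Beardon Theorem 9.3.3, Katok Theorem 3.5.4). Your route for the converse --- Poincar\'e's polygon theorem, i.e.\ Beardon Theorem 9.8.4 --- is also the paper's route. So the overall architecture matches.

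The genuine gap is your claim that the hypotheses of Poincar\'e's theorem are ``automatically satisfied'' because the side-pairing transformations lie in the ambient discrete group $\Gamma$. For a cycle at a vertex $v\in\frakH$, knowing that the cycle transformation $h$ lies in the finite stabilizer $\Gamma_v$ only tells you that $h$ is elliptic of some order $k$, i.e.\ a rotation about $v$ through an angle congruent to $2\pi j/k \pmod{2\pi}$ for some $j$; Poincar\'e's theorem needs the angle sum of the cycle to equal $2\pi/k$ exactly, so that the $k$ translates of $D$ tile a neighborhood of $v$, and this does not follow from discreteness alone. For instance, nothing in your argument excludes two sides of $D$ meeting at an order-$5$ elliptic fixed point with interior angle $4\pi/5$ and paired by the rotation through $4\pi/5$: the cycle transformation lies in the stabilizer exactly as you say, yet the translates of $D$ would wrap twice around the vertex and overlap, so the intended conclusion fails; the angle-sum condition is a real hypothesis to be verified, not a consequence of $h\in\Gamma_v$. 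Similarly, your parenthetical that the cycle transformation at a vertex at infinity lies in a ``(finite or parabolic)'' stabilizer is false as stated: points of $\partial\frakH$ can be fixed by hyperbolic elements of $\Gamma$, and ruling this out --- equivalently, verifying the completeness hypothesis of Beardon 9.8.4 at ideal vertices --- is exactly the one point where the paper's proof does substantive work, invoking $\mu(D)<\infty$ together with Beardon's accompanying exercises and the reference to Gel'fand--Graev--Pyatetskii-Shapiro. So the converse is not an application of Poincar\'e's theorem with vacuous hypotheses: the cycle and completeness conditions must either be checked or extracted from the finite-area hypothesis via the cited results, and your proposal assumes them away at precisely the step that carries the content.
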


\begin{proof}
The first statement is well-known \cite[Theorem 9.3.3]{Beardon}, \cite[Theorem 3.5.4]{Katok}.  For the second statement, we refer to Beardon \cite[Theorem 9.8.4]{Beardon} and the accompanying exercises: the condition that $\mu(D)<\infty$ ensures that any vertex which lies on the circle at infinity is fixed by a hyperbolic element \cite[\S 1]{Gelfand}.
\end{proof}

\begin{rmk}
The second statement of Proposition \ref{sidepair} extends to a larger class of polygons (see \cite[\S 9.8]{Beardon}), and therefore conceivably our results extend to the class of finitely generated non-elementary Fuchsian groups of the first kind.  For simplicity, we restrict to the case of groups with cofinite area.
\end{rmk}

We can define an analogous equivalence relation on the set of vertices of $D$, and we say that a vertex $v$ of $D$ is \emph{paired} if each side $s$ containing $v$ is paired to a side $s^*$ via an element $g \in G$ such that $gv$ is a vertex of $D$.

We now consider the corresponding notions in the hyperbolic unit disc $\frakD$, which will prove more convenient for algorithmic purposes.  The maps
\begin{equation} \label{phimap}
\setlength{\arraycolsep}{0.5ex}
\begin{array}{rlcrl}
\phi:\frakH &\to \frakD & \quad & \phi^{-1}:\frakD &\to \frakH \\
z &\mapsto \displaystyle{\frac{z-p}{z-\overline{p}}} & & w &\mapsto \displaystyle{\frac{\overline{p}w-p}{w-1}}
\end{array}
\end{equation}
define a conformal equivalence between $\frakH$ and $\frakD$ with $p \mapsto \phi(p)=0$.  Via the map $\phi$, the group $\Gamma$ acts on $\frakD$ as
\[ \Gamma^{\phi}=\phi\Gamma\phi^{-1} \subset \PSU(1,1)=\left\{\pm \begin{pmatrix} a & b \\ c & d \end{pmatrix} \in \PSL_2(\C) : a=\overline{d}, b=\overline{c}\right\}. \]
We may analogously define a Dirichlet domain $D(q)$ for $q \in \frakD$ with $\Gamma_q=\{1\}$, and we have $\phi(D(p))=D(0) \subset \frakD$.  To ease notation, we identify $\Gamma$ with $\Gamma^{\phi}$ by $g \mapsto g^{\phi}=\phi g\phi^{-1}$ when no confusion can result.

Any matrix $g=\begin{pmatrix} a & b \\ c & d \end{pmatrix} \in \SU(1,1)$ acts on $\frakD$, multiplying lengths by $|g'(z)|=|cz+d|^{-2}$, and therefore Euclidean lengths (and areas) are preserved if and only if $|cz+d|=1$.  We define the \emph{isometric circle} of $g$ to be
\[ I(g)=\{z \in \C: |cz+d|=1\}; \]
if $c \neq 0$, then $I(g)$ is a circle with radius $1/|c|$ and center $-d/c$, and if $c=0$ then $I(g)=\C$.  We denote by 
\[ \inter(I(g))=\{z \in \C: |cz+d|<1\}, \quad 
\ext(I(g))=\{z \in \C: |cz+d|>1\} \]
the \emph{interior} and \emph{exterior} of $I(g)$, respectively.

With these notations, we now find the following alternative description of the Dirichlet domain $D(0) \subset \frakD$.

\begin{prop} \label{forddom}
\ 
\begin{enumalph}
\item The domain $D(0)$ is the closure in $\frakD$ of 
\[ \bigcap_{g \in \Gamma \setminus \{1\}} \ext(I(g)). \]
\item For any $g \in \SU(1,1)$, we have 
\[ d(z,0) \left.\begin{cases} < \\ = \\ > \end{cases} \hspace{-2.5ex} \right\} d(g z,0) \text{ according as } 
\begin{cases}
z \in \ext(I(g)), \\
z \in I(g),\\
z \in \inter(I(g)).
\end{cases} \]
\end{enumalph}
\end{prop}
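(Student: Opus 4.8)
The plan is to prove part (b) by a direct computation in $\SU(1,1)$ and then to read off part (a) from the definition of the Dirichlet domain.

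For (b), write $g=\begin{pmatrix} a & b \\ c & d \end{pmatrix}\in\SU(1,1)$, so that $a=\overline d$, $b=\overline c$, and $\det g=|a|^2-|c|^2=1$. First I would establish the identity
\[ 1-|gz|^2=\frac{1-|z|^2}{|cz+d|^2}\qquad\text{for all }z\in\frakD. \]
This follows by expanding $|cz+d|^2-|az+b|^2$: the cross terms $cz\overline d$ and $az\overline b$ coincide because $\overline d=a$ and $\overline b=c$, and so cancel, and what remains is $(|d|^2-|b|^2)-(|a|^2-|c|^2)|z|^2=1-|z|^2$ since $|d|=|a|$, $|b|=|c|$, and $|a|^2-|c|^2=1$. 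Now $d(w,0)$ is a strictly increasing function of $|w|$, and $1-|z|^2>0$ on $\frakD$; so the displayed identity shows that $d(gz,0)$ is less than, equal to, or greater than $d(z,0)$ according as $|cz+d|^2$ is greater than, equal to, or less than $1$, which is exactly the trichotomy $z\in\ext(I(g))$, $z\in I(g)$, $z\in\inter(I(g))$.

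For (a), the definition of the Dirichlet domain gives $D(0)=\{z\in\frakD:d(z,0)\le d(gz,0)\text{ for all }g\in\Gamma\}$, and by part (b) the condition $d(z,0)\le d(gz,0)$ is equivalent to $z\notin\inter(I(g))$, i.e.\ to $z\in\overline{\ext(I(g))}$. Hence $D(0)=\bigcap_{g\in\Gamma\setminus\{1\}}\overline{\ext(I(g))}$. Setting $C=\bigcap_{g\in\Gamma\setminus\{1\}}\ext(I(g))$, we have $C\subseteq D(0)$ and, as $D(0)$ is closed, $\overline C\subseteq D(0)$. For the reverse inclusion I would use that $D(0)$, being a hyperbolic polygon, is closed and convex with nonempty interior, hence equals the closure of its interior; it then suffices to see $D(0)^o\subseteq C$. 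But if $z\in D(0)$ lies on some $I(g_0)$ with $g_0\ne1$, then every neighborhood of $z$ meets $\inter(I(g_0))$, which is disjoint from $D(0)\subseteq\overline{\ext(I(g_0))}$, so $z\notin D(0)^o$. Thus $D(0)^o\subseteq C$, and therefore $D(0)=\overline{D(0)^o}\subseteq\overline C\subseteq D(0)$, giving $D(0)=\overline C$.

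I expect the only genuinely non-formal point to be in part (a): passing from an intersection of the \emph{closed} exteriors $\overline{\ext(I(g))}$ to the \emph{closure of the intersection} of the open exteriors $\ext(I(g))$ cannot be done by formal set theory, since an infinite intersection of open sets need not be open, so the step really uses the geometric fact that a Dirichlet domain equals the closure of its interior. The computation in (b) is routine, the only care needed being the bookkeeping of the relations defining $\SU(1,1)$.
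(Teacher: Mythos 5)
Your route is genuinely different from the paper's: the paper proves this proposition simply by citing Katok, Theorem 3.3.5 (Ford's theorem), adding only the observation that no $g \in \Gamma \setminus \{1\}$ can have $c=0$ (such a $g$ would fix $0$, contradicting $\Gamma_p=\{1\}$), so that $\ext(I(g)) \neq \emptyset$ for every $g \neq 1$ and the intersection in (a) is nonempty. You instead give a self-contained computation: the identity $1-|gz|^2=(1-|z|^2)/|cz+d|^2$ is correct and is exactly the mechanism behind the cited result, and your passage in (a) from $\bigcap_g \overline{\ext(I(g))}$ to the closure of $\bigcap_g \ext(I(g))$ via $D(0)=\overline{D(0)^o}$ (using that $D(0)$ is a closed convex polygon with nonempty interior, a fact the paper has already recorded, so there is no circularity) is a legitimate way to handle the point you rightly identify as non-formal.

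Two corrections are needed. First, the concluding sentence of your part (b) states the trichotomy backwards: since $1-|z|^2>0$, the identity gives $1-|gz|^2 = (1-|z|^2)/|cz+d|^2 < 1-|z|^2$ when $|cz+d|^2>1$, hence $|gz|>|z|$ and $d(gz,0)>d(z,0)$; so $z \in \ext(I(g))$ corresponds to $d(z,0)<d(gz,0)$, as the proposition asserts, whereas you wrote that $d(gz,0)$ is \emph{less} than $d(z,0)$ in that case. The identity is right and the fix is one word, but as written the sentence asserts the false implication. Second, in part (a) you twice use that $I(g)$ is an honest circle: when you identify $\{z : z \notin \inter(I(g))\}$ with $\overline{\ext(I(g))}$, and when you argue that every neighborhood of a point of $I(g_0)$ meets $\inter(I(g_0))$. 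Both require $c \neq 0$; if $c=0$ then $I(g)=\C$, $\ext(I(g))=\emptyset$, and the equivalence fails (indeed the intersection in (a) would be empty). For $g \in \Gamma \setminus \{1\}$ one has $c \neq 0$ precisely because $g$ cannot fix $0$, by the hypothesis $\Gamma_p=\{1\}$. This is exactly the point the paper's own proof takes care to record, and your argument should state it explicitly.
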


\begin{proof}
See Katok \cite[Theorem 3.3.5]{Katok}; we note that if $g \in \Gamma$ and $c=0$, then $q=0$ is a fixed point of $g$, so by hypothesis $g=1$, and hence $\ext(I(g)) \neq \emptyset$ for all $g \neq 1$.  In particular, since $\Gamma$ has cofinite area we note that the intersection in (a) is nonempty.
\end{proof}

\begin{cor} \label{gginv}
For any $g \in \SU(1,1)$, we have $g I(g) = I(g^{-1})$.  
\end{cor}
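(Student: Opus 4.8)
The plan is to prove the corollary by a one-line computation with the M\"obius action, together with a small symmetry argument. Write $g = \begin{pmatrix} a & b \\ c & d \end{pmatrix}$ with $ad - bc = 1$; the relations $a = \overline{d}$, $b = \overline{c}$ defining $\SU(1,1)$ will not actually be needed. If $c = 0$, then $I(g) = I(g^{-1}) = \C$ and $g$ fixes $\infty$, so the identity $g I(g) = I(g^{-1})$ holds trivially; thus I would assume $c \neq 0$.

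First I would record that $g^{-1} = \begin{pmatrix} d & -b \\ -c & a \end{pmatrix}$, so that $I(g^{-1}) = \{w \in \C : |-cw + a| = 1\}$. Then, for any $z$ with $cz + d \neq 0$, writing $w = g(z) = (az+b)/(cz+d)$, a direct calculation gives $-cw + a = (ad - bc)/(cz+d) = 1/(cz+d)$. Hence $|-cw + a| = 1/|cz+d|$, so $w \in I(g^{-1})$ if and only if $|cz+d| = 1$, i.e.\ if and only if $z \in I(g)$; this yields $g I(g) = I(g^{-1})$ at once. Alternatively, one may extract only the inclusion $g I(g) \subseteq I(g^{-1})$ and then obtain the reverse inclusion by applying it with $g^{-1}$ in place of $g$ and composing with $g$. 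A more conceptual route uses Proposition~\ref{forddom}(b): $z \in I(g)$ iff $d(gz,0) = d(z,0)$, and setting $w = gz$ this reads $d(w,0) = d(g^{-1}w,0)$, i.e.\ $w \in I(g^{-1})$.

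I do not anticipate any genuine obstacle: the content is the elementary identity $-cw + a = 1/(cz+d)$, which is just the statement that the derivative of $g^{-1}$ at $w = g(z)$ is the reciprocal of the derivative of $g$ at $z$ (so $I(g)$, the locus where $g$ is a local isometry, maps onto the corresponding locus for $g^{-1}$). The only point needing attention is the degenerate case $c = 0$ — equivalently, the case where $0$ is a fixed point of $g$, already flagged in the proof of Proposition~\ref{forddom} — which must be handled separately but is immediate.
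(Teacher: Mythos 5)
Your proposal is correct, but your primary argument takes a different route from the paper. The paper's proof is exactly the one-line ``conceptual route'' you mention at the end: by Proposition~\ref{forddom}(b), $w=gz \in I(g^{-1})$ if and only if $d(g^{-1}w,0)=d(w,0)$, if and only if $d(z,0)=d(gz,0)$, if and only if $z \in I(g)$. Your main argument instead is the direct M\"obius computation $-cw+a=1/(cz+d)$ for $w=g(z)$, which is equally valid and is in fact the classical (Ford) proof. The comparison: the paper's argument is shorter given that Proposition~\ref{forddom}(b) is already in hand, stays entirely within the hyperbolic-distance framework used throughout, and needs no case distinction; your computation is more elementary and more general --- it uses only $ad-bc=1$, so it holds for any element of $\SL_2(\C)$ without the unitarity relations, and it identifies $I(g)$ intrinsically as the locus $|g'(z)|=1$, making the equality one of full Euclidean circles in $\C$ rather than only of the loci of points $z$ in the disc where the metric criterion of Proposition~\ref{forddom}(b) applies. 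The small points you flag are handled correctly: the case $c=0$ is degenerate but immediate, and the excluded pole $z=-d/c$ (respectively $w=a/c$) lies at the center of $I(g)$ (respectively $I(g^{-1})$), not on the circle, so the biconditional does give the set equality $gI(g)=I(g^{-1})$.
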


\begin{proof}
By Proposition \ref{forddom}(b), we have 
\[ w=gz \in I(g^{-1}) \Leftrightarrow d(g^{-1}w,0)=d(w,0) \Leftrightarrow d(z,0)=d(gz,0) \Leftrightarrow z \in I(g) \]
and the result follows.
\end{proof}

\begin{rmk} \label{HtoD}
One can similarly define isometric circles $I(g)$ for $g \in \PSL_2(\R)$ acting on $\frakH$.  One warning is due, however: although $\phi^{-1}(D(0))=D(p) \subset \frakH$ is again a Dirichlet domain, its sides need not be contained in isometric circles (as the map $\phi$ is a hyperbolic isometry, whereas isometric circles are defined by a Euclidean condition).  Instead, we see easily that 
\[ \phi^{-1} I(g^{\phi})=\{z \in \frakH : d(z,p)=d(g z,p)\}, \] 
i.e., the isometric circle $I(g^{\phi})$ corresponds in $\frakH$ to the perpendicular bisector of the geodesic between $p$ and $g(p)$.  In particular, if $p=i$ then a somewhat lengthy calculation reveals that for $g=\begin{pmatrix} a & b \\ c & d \end{pmatrix} \in \SL_2(\R)$, this perpendicular bisector is the half-circle of radius $\displaystyle{\frac{a^2+b^2+c^2+d^2-2}{(a^2+c^2-1)^2}}$ centered at $\displaystyle{\frac{ab+cd}{a^2+c^2-1}} \in \R$.
\end{rmk}

The domain $D(0)$ is also known as a \emph{Ford domain}, since Proposition \ref{forddom} is originally attributed to Ford \cite[Theorem 7, \S 20]{Ford}.  The heart of our algorithm (as provided in the main theorem) will be to algorithmically construct a Ford domain.

\section{Algorithms for the upper half-plane and unit disc}

We represent points $p \in \frakH,\frakD$ using exact complex arithmetic: see Pour-El--Richards \cite{PER}, Weihrauch \cite{Weihrauch} for theoretical foundations (the subject of computable analysis) and e.g.\ Boehm \cite{Boehm}, Gowland-Lester \cite{GL} for a discussion of practical implementations.  Alternatively, our algorithms can be interpreted using fixed and sufficiently large precision; even though one cannot predict in advance the precision required to guarantee correct output, it is likely that an error due to round-off will only very rarely occur in practice; see also Remark \ref{precissue}. The induced action on $\frakD$ has $\Gamma \leftrightarrow \Gamma^{\phi} \subset \SU(1,1)$, represented as matrices with exact complex entries.

A Fuchsian group $\Gamma$ is \emph{exact} if it has a finite set of generators $G \subset \SL_2(K)$ with $K \hookrightarrow \Qbar \cap \R$ a number field; from now on, we assume that the group $\Gamma$ is exact.  Even up to conjugation in $\PSL_2(\R)$, not every finitely generated Fuchsian group is exact; our methods conceivably extend to the case where the set of generators $G \subset \SL_2(\R)$ are specified with (exact) real entries, but we will not discuss this case any further.  Algorithms for efficiently computing with algebraic number fields are well-known (see e.g.\ Cohen \cite{Cohen}). 

We now discuss some elementary methods for working with generalized hyperbolic polygons in $\frakD$, which are defined analogously as those in $\frakH$.  

Let $\overline{\frakD}=\{z \in \C:|z| \leq 1\}$ denote the closure of $\frakD$ and let $\partial \frakD=\{z \in \C:|z|=1\}$ be the \emph{circle at infinity}.  We represent a geodesic $L$ in $\frakD$ in bits by four pieces of data: 
\begin{itemize}
\item the center $c=\ctr(L) \in \C \cup \{\infty\}$,
\item the radius $r=\rad(L) \in \R \cup \{\infty\}$ of $L$, and
\item the \emph{initial point} $z=\init(L) \in \overline{\frakD}$ and the \emph{terminal point} $w \in \overline{\frakD}$;
\end{itemize}
the inital and terminal points are normalized so that the path along $L$ follows a counterclockwise orientation.  Although this data is redundant, it will be more efficient in practice to store all values rather than, say, to recompute $c$ and $r$ when needed.  

If $L_1,L_2 \subset \frakD$ are geodesics which intersect at a point $v \in \frakD \setminus \{0\}$, then we define $\angle(L_1,L_2)$ to be the counterclockwise-oriented angle at $v$ from the geodesics $L_1$ to $L_2$ for the wedge directed toward the origin, so that in particular we have $\angle(L_2,L_1)=-\angle(L_1,L_2)$.

\begin{exm} \label{angle}
In the following figure, we depict a geodesic and the angle $\angle(L_1,L_2) \approx 3\pi/8$ between geodesics.
\begin{center}
\begin{pspicture}(3,-2.5)(11,2.5)
\psclip{\psframe[linecolor=white](3,-2.5)(6,2.5)} \pscircle(0,0){5} \endpsclip
\psclip{\psframe[linecolor=white](3,-2.5)(6,2.5)} \pscircle(5.5,0){2} \endpsclip

\rput(5.7,0){$c$} \pscircle[fillstyle=solid,fillcolor=black](5.5,0){0.05}
\rput(4.5,0.2){$r$} \psline(3.5,0)(5.5,0)
\rput(2.9,-1){$z=\init(L)$} \pscircle[fillstyle=solid,fillcolor=black](3.78,-1){0.05}
\rput(4,1.58){$w$} \pscircle[fillstyle=solid,fillcolor=black](4,1.31){0.05}

\psclip{\psframe[linecolor=white](8,-2.5)(9.95,2.5)} \pscircle(10.5,-1.5){2.3} \endpsclip
\psclip{\psframe[linecolor=white](8,-2.5)(10,2.5)} \pscircle(10.4,2.1){1.9} \endpsclip
\psclip{\psframe[linecolor=white](8,-2.5)(11,2.5)} \pscircle(5,0){5} \endpsclip
\psarcn{->}(9.7,0.55){0.7}{203}{160}

\rput(8.1,2.3){$L_2$}
\rput(8.1,0.6){$\angle(L_1,L_2)$}
\rput(7.9,-1.75){$L_1$}
\end{pspicture} \\
\textbf{Figure \ref{angle}}: Geodesics and angles
\end{center} 
\end{exm}

We leave it to the reader to show that one can compute using elementary formulae the following quantities: for geodesics $L_1,L_2$, the intersection $L_1 \cap L_2$ and (if nonzero) the angle $\angle(L_1,L_2)$, as well as the area of a hyperbolic polygon.  

\begin{defn}
Let $G \subset \Gamma \setminus \{1\}$.  The \emph{exterior domain} of $G$, denoted $E=\ext(G)$, is the closure in $\overline{\frakD}$ of the set $\bigcap_{g \in G} \ext(I(g)) \cap \frakD$.
\end{defn}

With this definition, Proposition \ref{forddom}(a) becomes simply the statement that $\ext(\Gamma \setminus \{1\})$ is the closure of $D(0)$.

Let $G \subset \Gamma$ be a finite subset and let $E=\ext(G)$ be its exterior domain.  Then $E$ is a generalized hyperbolic polygon whose sides are contained in isometric circles $I(g)$ with $g \in G$.  A \emph{proper vertex} of $E$ is a point of intersection $v \in I(g) \cap I(g')$ between two sides (with $g \neq g' \in G$); a \emph{vertex at infinity} of $E$ is a point of intersection $v \in I(g) \cap \partial \frakD$ between a side and the circle of infinity.  A \emph{vertex} of $E$ is either a proper vertex or a vertex at infinity.

\begin{defn} \label{extGdomU}
Let $E=\ext(G)$ be an exterior domain.  A sequence $U=g_1,\dots,g_n$ is a \emph{normalized boundary} for $E$ if:
\begin{enumroman}
\item $E=\ext(U)$; 
\item $I(g_1),\dots,I(g_n)$ contain the counterclockwise consecutive sides of $D$; and
\item the vertex $v \in E$ with minimal $\arg(v) \in (0,2\pi)$ is either a proper vertex with $v \in I(g_1) \cap I(g_2)$ or a vertex at infinity with $v \in I(g_1)$.
\end{enumroman}
\end{defn}

It is clear that for each exterior domain $E$, there exists a unique normalized boundary $G$ for $E$: in (i) and (ii) we order exactly those $g_i$ for which $I(g_i)$ are sides of $E$ and in (iii) we choose a consistent place to start.  

\begin{exm} \label{normbound}
In the following figure, we exhibit a normalized boundary $G=\{g_1,g_2,g_3,g_4\}$; the vertices $v_1,v_2$ are on the circle at infinity whereas $v_3,v_4,v_5$ are proper.
\begin{center}
\begin{pspicture}(-3.5,-3.5)(3.5,3.5)
\pscircle[fillstyle=solid,fillcolor=lightgray](0,0){3}

\psclip{\pscircle(0,0){3}} \pscircle[fillstyle=solid,fillcolor=white](-4.2,-4.2){5.5} \endpsclip
\psclip{\pscircle(0,0){3}} \pscircle[fillstyle=solid,fillcolor=white](-3.5,1){2} \endpsclip
\psclip{\pscircle(0,0){3}} \pscircle[fillstyle=solid,fillcolor=white](-3.5,4.7){5} \endpsclip
\psclip{\pscircle(0,0){3}} \pscircle[fillstyle=solid,fillcolor=white](2.3,-3.5){3} \endpsclip
\psclip{\pscircle(0,0){3}} \pscircle[fillstyle=solid,fillcolor=white](3,-1){1.5} \endpsclip

\psclip{\pscircle(0,0){3}} \pscircle(-4.2,-4.2){5.5} \endpsclip
\psclip{\pscircle(0,0){3}} \pscircle(-3.5,1){2} \endpsclip
\psclip{\pscircle(0,0){3}} \pscircle(-3.5,4.7){5} \endpsclip
\psclip{\pscircle(0,0){3}} \pscircle(2.3,-3.5){3} \endpsclip
\psclip{\pscircle(0,0){3}} \pscircle(3,-1){1.5} \endpsclip

\psline{->}(-3.5,0)(3.5,0) 
\psline{->}(0,-3.5)(0,3.5)

\rput(1.35,3){$v_2$} \pscircle[fillstyle=solid,fillcolor=black](1.1,2.785){0.05}
\rput(-1.1,0.6){$v_3$} \pscircle[fillstyle=solid,fillcolor=black](-1.1,0.33){0.05}
\rput(0.4,-1.6){$v_4$} \pscircle[fillstyle=solid,fillcolor=black](0.4,-1.2){0.05}
\rput(1.8,-0.85){$v_5$} \pscircle[fillstyle=solid,fillcolor=black](1.57,-0.6){0.05}
\rput(3.25,0.55){$v_1$} \pscircle[fillstyle=solid,fillcolor=black](2.94,0.48){0.05}

\rput(0.4,2.4){$I(g_2)$}
\rput(-0.8,-0.5){$I(g_3)$}
\rput(0.8,-0.5){$I(g_4)$}
\rput(2.5,-2.3){$I(g_1)$}
\rput(1.5,1.2){$\ext(G)$}
\end{pspicture} \\
\textbf{Figure \ref{normbound}}: Normalized boundary of a generalized hyperbolic polygon
\end{center} 
\end{exm}

We now detail an algorithm which computes a normalized boundary for a given exterior domain.  

\begin{alg} \label{algdomG}
Let $G \subset \Gamma$ be a finite subset.  This algorithm returns the normalized boundary $U$ of the exterior domain $E=\ext(G)$.  
\begin{enumalg}
\item Initialize $\theta := 0$, $U := \emptyset$, and $L := [0,1]$.
\item Let
\[ H := \{g \in G : \arg(I(g) \cap L) \geq \theta\}. \]
\begin{enumalgalph}
\item If $H = \emptyset$, let $g \in G$ be such that 
\[ \theta := \arg(\init(I(g))) \in \theta + [0,2\pi) \] 
is minimal.
\item If $H \neq \emptyset$, let $g \in H$ be such that 
\[ \theta := \arg(I(g) \cap L) \in \theta + [0,2\pi) \] 
is minimal; if more than one such $g$ exists, let $g$ be the one that minimizes $\angle(L,I(g))$. 
\end{enumalgalph}
Let $U := U \cup \{g\}$ and let $L := I(g) \cap \overline{\frakD}$.
\item If $U=\{g_1,\dots,g_n\}$ and $g_n=g_1$, return $U := \{g_1,\dots,g_{n-1}\}$.  Otherwise, return to Step 2.
\end{enumalg}
\end{alg}

\begin{proof}[Proof of correctness]
By definition $\ext(U)$ is a generalized hyperbolic polygon.  Suppose that $E \neq \ext(U)$.  Then there exists $g \in G$ such that $L = I(g) \cap \ext(U)$ is not just a vertex of $\ext(U)$.  Consider the initial point $z=\init(L)$: either $z$ lies on a side $I(g_i)$ of $\ext(U)$ or $z \in \partial \frakD$.  

Suppose that $z \in I(g_i)$.  Let $v_i$ be the initial vertex of the side $s_i \subset I(g_i)$.  Then in the $i$th iteration of Step 2 of the algorithm we have $g \in H$, so the terminal vertex $v_{i+1}$ of $s_i$ is proper and we are in case (b).  But by assumption we have $d(v_i,z) \leq d(v_i,v_{i+1})$ since $I(g_i)$ is a geodesic, and $\arg$ increases along $s_i$ with the distance, thus according to the stipulations of the algorithm we must have $z=v_{i+1}$.  But then in order for the interior of $I(g)$ to intersect $\ext(U)$ nontrivially, we must have $\angle(L,I(g_i)) < \angle(I(g_{i+1}),I(g))$, a contradiction.  

\begin{center}
\begin{pspicture}(-3.5,0)(3.5,3.5)
\psarc[fillstyle=solid,fillcolor=lightgray](0,0){3}{0}{180}

\psclip{\psarc(0,0){3}{0}{180}} \pscircle[fillstyle=solid,fillcolor=white](3.2,1){2} \endpsclip
\psclip{\psarc(0,0){3}{0}{180}} \pscircle[fillstyle=solid,fillcolor=white](0,4.5){3} \endpsclip
\psclip{\psarc(0,0){3}{0}{180}} \pscircle[fillstyle=solid,fillcolor=white](-3.2,1.5){2} \endpsclip

\psarc(0,0){3}{0}{180}
\psclip{\psarc(0,0){3}{0}{180}} \pscircle(3.2,1){2} \endpsclip
\psclip{\psarc(0,0){3}{0}{180}} \pscircle(0,4.5){3} \endpsclip
\psclip{\psarc(0,0){3}{0}{180}} \pscircle(-3.2,1.5){2} \endpsclip
\psclip{\psarc(0,0){3}{0}{180}} \pscircle[linestyle=dashed](-2,3.5){2.5} \endpsclip

\psline{->}(-3.5,0)(3.5,0) 

\rput(1.65,1.65){$v_i$} \pscircle[fillstyle=solid,fillcolor=black](1.45,1.9){0.05}
\rput(-1.6,1.6){$v_{i+1}$} \pscircle[fillstyle=solid,fillcolor=black](-1.25,1.8){0.05}
\rput(-0.45,1.8){$z$} \pscircle[fillstyle=solid,fillcolor=black](-0.45,1.55){0.05}

\rput(0.4,1.2){$I(g_i)$}
\rput(-2.1,2.8){$I(g_{i+1})$}
\rput(0.5,3.25){$I(g)$}

\end{pspicture} \\
\end{center} 

So suppose that $z \in \partial \frakD$.  Then there exists $i$ such that $z$ lies on the principal circle between the terminal point of $I(g_i)$ and the initial point of $I(g_{i+1})$.  But then $\arg(\init(I(g))) < \arg(\init(I(g_{i+1}))$, contradicting (a).  This proves that (i) holds in Definition \ref{extGdomU}.  

It is obvious that (ii) holds, and condition (iii) holds by initialization: if the vertex $v \in E$ with minimal $\arg(v) \in (0,2\pi)$ is a vertex at infinity then it is found in the first iteration of the algorithm in stage (a), and if it is a proper vertex then it is found in the second iteration in stage (b).
\end{proof}

A Ford domain $D(0)$ is specified in bits by a normalized boundary $G$ for $D(0)$.  We can similarly specify a Dirichlet domain $D(p)$ by an analogously defined normalized boundary of perpendicular bisectors, as in Remark \ref{HtoD}; for many purposes, it will be sufficient to represent $D(p)$ by a sequence of vertices (ordered in a counterclockwise orientation around $p$).

\begin{rmk} \label{precissue}
Although the intermediate computations as above are of a numerical sort, an algorithm to compute a Dirichlet domain accepts exact input and produces exact output.
\end{rmk}

\section{Element enumeration in arithmetic Fuchsian groups}

In this section, we treat arithmetic Fuchsian groups, and in particular we exhibit methods for enumerating ``small'' elements of these groups.  See Vigneras \cite{Vigneras} for background material and Voight \cite[Chapter 4]{Voight} for a discussion of algorithms for quaternion algebras.

Let $F$ be a number field with $[F:\Q]=n$ and discriminant $d_F$.  A \emph{quaternion algebra} $B$ over $F$ is an $F$-algebra with generators $\alpha,\beta \in B$ such that 
\[ \alpha^2=h, \quad \beta^2=k, \quad \beta\alpha=-\alpha\beta \]
with $h,k \in F^*$; such an algebra is denoted $B=\quat{h,k}{F}$ and is specified in bits by $h,k \in F^*$.  An element $\gamma \in B$ is represented by $\gamma = x+y\alpha+z\beta+w\alpha\beta$ with $x,y,z,w \in F$, and we define the \emph{reduced trace} and \emph{reduced norm} of $\gamma$ by $\trd(\gamma)=2x$ and $\nrd(\gamma)=x^2-hy^2-kz^2+hkw^2$, respectively.  

Let $B$ be a quaternion algebra over $F$ and let $\Z_F$ denote the ring of integers of $F$.  An \emph{order} $\calO \subset B$ is a finitely generated $\Z_F$-submodule with $F\calO=B$ which is also subring; an order is \emph{maximal} if it is not properly contained in any other order.  We represent an order by a \emph{pseudobasis} over $\Z_F$; see Cohen \cite[\S 1]{Cohen2} for methods of computing with finitely generated modules over Dedekind domains using pseudobases.

A place $v$ of $F$ is \emph{split} or \emph{ramified} according as $B_v = B \otimes_F F_v \cong M_2(F_v)$ or not, where $F_v$ denotes the completion at $v$.  The set $S$ of ramified places of $B$ is finite and of even cardinality, and the ideal $\frakd=\prod_{v \in S, v \nmid \infty} \frakp_v$ of $\Z_F$ is called the \emph{discriminant} of $B$.  

Now suppose that $F$ is a totally real field, and there is a unique split real place $v \not\in S$ corresponding to $\iota_\infty:B \hookrightarrow M_2(\R)$.  Let $\calO \subset B$ be an order and let $\calO_1^*$ denote the group of units of reduced norm $1$ in $\calO$.  Then the group $\Gamma(\calO)=\iota_\infty(\calO_1^*/\{\pm 1\}) \subset \PSL_2(\R)$ is a Fuchsian group \cite[\S\S 5.2--5.3]{Katok}.  If $\calO$ is maximal, we denote $\Gamma^B(1)=\Gamma(\calO)$.  An \emph{arithmetic Fuchsian group} $\Gamma$ is a Fuchsian group commensurable with $\Gamma^B(1)$ for some choice of $B$.  One can, for instance, recover the usual modular groups in this way, taking $F=\Q$, $\calO=M_2(\Z) \subset M_2(\Q)=B$, and $\Gamma \subset \PSL_2(\Z)$ a subgroup of finite index.


An arithmetic Fuchsian group $\Gamma$ has cofinite area; indeed, by a formula of Shimizu \cite[Appendix]{Shimizu}, the area $A=\mu(X)=\mu(\Gamma \backslash \frakH)$ is given by
\begin{equation} \label{shimizu}
A = \frac{4}{(2\pi)^{2n}} d_F^{3/2} \zeta_F(2) \Phi(\frakd) [\Gamma^B(1):\Gamma],
\end{equation}
where $\zeta_F(s)$ denotes the Dedekind zeta function of $F$, and
\[ \Phi(\frakd)=\#(\Z_F/\frakd\Z_F)^*=\N(\frakd)\prod_{\frakp \mid \frakd} \left(1-\frac{1}{\N(\frakp)} \right); \]
here the hyperbolic area is normalized so that 
\[ \mu(\Omega)=\frac{1}{2\pi}\int\!\!\int_\Omega \frac{dx\,dy}{y^2} \] 
and hence an ideal triangle has area $1/2$.  

\begin{rmk} \label{areacompute}
The area $A$ is effectively computable from the formula (\ref{shimizu}).  By the Riemann-Hurwitz formula, we have
\begin{equation} \label{RH}
A = 2g-2+\sum_q e_q\left(1-\frac{1}{q}\right) + e_\infty
\end{equation}
where $e_q$ is the number of elliptic cycles of order $q \in \Z_{\geq 2}$ in $\Gamma$ and $e_\infty$ the number of parabolic cycles.  In particular, $A \in \Q$; and since $e_q>0$ implies $F(\zeta_{2q}) \hookrightarrow B$, the denominator of $A$ is bounded by the least common multiple of all $q$ such that $[F(\zeta_{2q}):F]=2$ (which in particular requires that $F$ contains the totally real subfield $\Q(\zeta_{2q})^+$ of $\Q(\zeta_{2q})$).  Therefore, it suffices to compute the usual Dirichlet series or Euler product expansion for $\zeta_F(2)$ with the required precision; see also Dokchitser \cite{Dokchitser}.
\end{rmk}

We now relate isometric circles to the arithmetic of $B$.  Let $p \in \frakH$ have $\Gamma_p=\{1\}$.  A short calculation with the maps defined in (\ref{phimap}) shows that if $g=\begin{pmatrix} a & b \\ c & d \end{pmatrix} \in \SL_2(\R)$, then $g^{\phi}=\phi g\phi^{-1} \in \SU(1,1)$ has radius
\[ \rad(I(g^{\phi}))=\displaystyle{\frac{2\impart(p)}{|f_g(p)|}}, \]
where $f_g(t)=ct^2+(d-a)t-b$, a polynomial whose roots are the fixed points of $g$ in $\C$.  We will abbreviate $\rad(g)=\rad(I(g^{\phi}))$.  The map 
\begin{equation} \label{invrad}
\setlength{\arraycolsep}{0.5ex}
\begin{array}{rl}
\invrad:M_2(\R) &\to \R \\
g &\mapsto |f_g(p)|^2 +2y^2\det(g) 
\end{array}
\end{equation}
yields a quadratic form on $M_2(\R)$: explicitly, if $p=x+yi$, we have
\begin{align*} 
\invrad\!\begin{pmatrix} a & b \\ c & d \end{pmatrix} &=\left(xa + b - (x^2-y^2) c - xd\right)^2 + y^2\left(a-2xc-d\right)^2 + 2y^2(ad-bc) \\
&= y^2(a-xc)^2+(xa+b-x^2c-xd)^2+y^4c^2+y^2(xc+d)^2,
\end{align*}
and hence the form $\invrad$ is positive definite and via $\iota_\infty$ induces a positive definite form $\invrad:B \to \R$.  For $g \in B$, we note that $\det \iota_\infty(g)=v(\nrd(g))$, where $v$ is the unique split real place of $B$.  

Suppose that $p=i$.  Then we have simply $\invrad\!\begin{pmatrix} a & b \\ c & d \end{pmatrix} = a^2+b^2+c^2+d^2$.  Let $B=\quat{h,k}{F}$.  Identify $F$ with its image $F \hookrightarrow \R$ under the unique split real place of $B$; without loss of generality, we may assume that $h>0$. We may therefore embed $\iota_\infty:B \hookrightarrow M_2(\R)$ by letting
\begin{equation} \label{embedmin}
 \alpha \mapsto \begin{pmatrix} \sqrt{h} & 0 \\ 0 & -\sqrt{h} \end{pmatrix}, \quad 
\beta \mapsto \begin{pmatrix} 0 & \sqrt{|k|} \\ \sgn(k)\sqrt{|k|} & 0 \end{pmatrix}
\end{equation}
where $\sgn$ denotes the sign.  Therefore if $g=x+y\alpha+z\beta+w\alpha\beta \in B$, then we see directly that 
\[ \invrad(g)=x^2+hy^2+|k|z^2+h|k|w^2. \]


For the ramified real places $v$ of $F$, corresponding to $B \hookrightarrow B \otimes_F \R \cong \HH$, the reduced norm form $\nrd_v:B \to \R$ by $g \mapsto v(\nrd(g))$ is positive definite.  Putting these together, we find that the \emph{absolute reduced norm}
\begin{align*}
N:B &\to \R \\
g &\mapsto 2y^2 \textstyle{\sum_{v \in S, v \mid \infty}} \nrd_v(g) + \invrad(g) = |f_g(p)|^2 + 2y^2 \Tr_{F/\Q} \nrd(g)
\end{align*}
is positive definite and gives $\calO$ the structure of a lattice of rank $4n$.  

The elements $g \in \calO$ with small absolute reduced norm $N$ are those such that $|f_g(p)|$ and $\Tr_{F/\Q} \nrd(g)$ are both small---in particular, this will include the elements of $\calO_1^*$ with small $\invrad$ (with respect to $p \in \frakH$), which correspond to elements $g \in \Gamma$ whose isometric circle in $\frakD$ (centered at $p$) has large radius.  Since the Dirichlet domain $D(p)$ has only finitely many sides, those $g \in \Gamma$ with $\rad(g)$ sufficiently small radius cannot contribute to the boundary of $D(p)$.  

Hence, one simple idea to construct $D(p)$ would be to enumerate all elements of $\calO_1^*$ by increasing absolute reduced norm $N$ until the exterior domain of these elements has area equal to $\mu(\Gamma\backslash\frakH)$.  This method shows that $D(p)$ is indeed computable, and may have been known to Klein; it is mentioned by Katok \cite{Katok2} when $F=\Q$ and sees further explication by Johansson \cite{Johansson}.  Using the above framework, we can immediately improve upon this method by enumerating such elements efficiently using lattice reduction, as follows.

\begin{alg} \label{enumlll}
Let $\calO \subset B$ be a quaternion order.  This algorithm returns a Dirichlet domain for $\Gamma(\calO)$.

\begin{enumalg}
\item Compute $A=\mu(\Gamma(\calO)\backslash \frakH)$ by Remark \ref{areacompute}.
\item Embed $\calO \hookrightarrow \R^{4n}$ as a lattice using the absolute reduced norm form $N$, and choose $C \in \R_{>0}$.
\item Using the Fincke-Pohst algorithm \cite{FinckePohst}, compute the set
\[ G(C)=\left\{\iota_\infty(g/u): \pm g \in \calO,\ N(g) \leq C,\ \nrd(g)=u^2 \in \Z_F^{*2}\right\} \subset \Gamma. \]
\item From Algorithm \ref{algdomG}, compute $E=\ext(G(C))$.  If $\mu(E)=A<\infty$, then return $E$; otherwise, increase $C$ and return to Step 2.
\end{enumalg}
\end{alg}

\begin{rmk} \label{chooseC}
In choosing $C$, we note that
\[ \{g \in \calO_1^*: \rad(g) \geq R\}=\left\{g \in \calO: N(g) \leq 2y^2\left(n+\frac{2}{R^2}\right)\right\} \cap \calO_1^*; \]
in practice, we would like to take $C$ large enough so that $G(C) \neq \emptyset$ but not too large.  It is not immediately clear how to choose $C$ (and a strategy for its incrementation) optimally in general, unless one knows something about the radii of the sides of the Dirichlet domain.
\end{rmk}

Our final algorithm (Algorithm \ref{algDO}) significantly improves on Algorithm \ref{enumlll} by the use of a reduction algorithm, which we introduce in the next section.

\section{Reduction algorithm}

In this section, we introduce the reduction algorithm (Algorithm \ref{algred}) which forms the heart of the paper.  This algorithm will allow us to find a normalized basis for the group $\Gamma$ (Algorithm \ref{algD}), yielding a fundamental domain.  

Throughout this section, let $G=\{g_1,\dots,g_t\} \subset \Gamma \setminus \{1\}$ be an (ordered) finite subset of a Fuchsian group $\Gamma$, and denote by $\la G \ra$ the group generated by $G$.  For any $z \in \frakD$, we have a map
\begin{align*}
\rho: \Gamma &\to \R_{\geq 0} \\
\gamma &\mapsto \rho(\gamma;z)=d(\gamma z, 0)
\end{align*}
where $d$ denotes hyperbolic distance.  We abbreviate $\rho(\gamma;0)=\rho(\gamma)$.

\begin{defn}
Let $z \in \frakD$.  An element $\gamma \in \Gamma$ is \emph{$(G,z)$-reduced} if for all $g \in G$, we have $\rho(\gamma;z) \leq \rho(g \gamma;z)$, and $\gamma$ is \emph{$G$-reduced} if it is $(G,0)$-reduced.
\end{defn}

\begin{rmk} \label{rmkgred}
By Proposition \ref{forddom}, we note that $\gamma$ is $(G,z)$-reduced if and only if $\gamma z \in \ext(G)$.   
\end{rmk}

We arrive at the following straightfoward algorithm to perform $(G,z)$-reduction.  

\begin{alg} \label{algred}
Let $\gamma \in \Gamma$ and let $z \in \frakD$.  This algorithm returns elements $\overline{\gamma} \in \Gamma$ and $\delta \in \la G \ra$ such that $\overline{\gamma}$ is $(G,z)$-reduced and $\overline{\gamma}=\delta\gamma$.
\begin{enumalg}
\item Initialize $\overline{\gamma} := \gamma$ and $\delta := 1$.
\item If $\rho(\overline{\gamma};z) \leq \rho(g \overline{\gamma};z)$ for all $g \in G$, return $\overline{\gamma}, \delta$.  Otherwise, let $g \in G$ be the first element in $G$ such that 
\[ \rho(g \overline{\gamma};z) = \min_i \rho(g_i \overline{\gamma};z). \]
Let $\overline{\gamma} := g_i \overline{\gamma}$ and $\delta := g_i \delta$, and return to Step $2$.
\end{enumalg}
\end{alg}

We denote the output of the above algorithm $\overline{\gamma}=\reduc_G(\gamma;z)$ and abbreviate $\reduc_G(\gamma;0)=\reduc_G(\gamma)$.

\begin{proof}[Proof of correctness]
The output of the algorithm $\overline{\gamma}$ is by definition $G$-reduced.  The algorithm terminates because if $\overline{\gamma}_1,\overline{\gamma}_2,\dots$ are the elements that arise in the iteration of Step 2, then $\rho(\overline{\gamma}_1;z)>\rho(\overline{\gamma}_2;z)>\dots$; however, the action of $\Gamma$ is discrete, so among the points $\{\overline{\gamma}_i(z)\}_i$, only finitely many are distinct.
\end{proof}

A priori, Step $2$ in Algorithm \ref{algred} depends on the ordering of the set $G$ and therefore the output $\overline{\gamma}$ will depend on this ordering.  This is analogous to the situation of the reduction theory of polynomials, as follows.  Let $k$ be a field, let $R=k[x_1,\dots,x_n]$ be the polynomial ring over $k$ in $n$ variables with a choice of term order, and let $G=g_1,\dots,g_t \in R$ be not all zero.  Applying the generalized division algorithm, one can reduce a polynomial $f \in R$ with respect to $G$, and the result is unique (i.e., independent of the ordering of the $g_i$) for all $f$ if $G$ is a Gr\"obner basis of the ideal $I=\la g_1, \dots, g_t \ra$.  Moreover, if $G$ is a Gr\"obner basis, then $f \in I$ if and only if the remainder on division of $f$ by $G$ is zero.  (See e.g.\ Cox-Little-O'Shea \cite[Chapter 2]{CLO}.)  We can prove analogous statements, replacing the ring $R$ by the group $\Gamma$, as follows.

\begin{prop} \label{normbas}
Suppose that $\ext(G)$ is a fundamental domain for $\la G \ra$.  Then for almost all $z \in \frakD$, $\reduc_G(\gamma;z)$ as an element of $\Gamma$ is independent of the ordering of $G$ for all $\gamma \in \la G \ra$.  Moreover, for all $\gamma \in \Gamma$, we have $\reduc_G (\gamma)=1$ if and only if $\gamma \in \la G \ra$.
\end{prop}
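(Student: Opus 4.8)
The plan is to exploit the hypothesis that $E = \ext(G)$ is a fundamental domain for $\la G \ra$, which by Remark~\ref{rmkgred} and Proposition~\ref{forddom} means precisely that $E$ is (the closure of) the Dirichlet domain $D(0)$ for $\la G \ra$, with the $I(g)$, $g \in G$, containing its sides. The key observation is that running Algorithm~\ref{algred} on $\gamma$ with parameter $z$ produces a sequence of points $\gamma z = \overline\gamma_1 z, \overline\gamma_2 z, \dots, \overline\gamma_m z$ with strictly decreasing distance to $0$, terminating at a point $\overline\gamma_m z \in \ext(G) = E$. So $\reduc_G(\gamma;z)$ is \emph{some} element $\delta$ of $\la G \ra$ (times $\gamma$) carrying $\gamma z$ into $E$; the content of the first assertion is that, for generic $z$, this element is uniquely determined.

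First I would prove the uniqueness (independence of ordering). Suppose $\gamma \in \la G \ra$; then reducing $\gamma$ amounts to reducing a point $w = \gamma z$ in the $\la G \ra$-orbit of $z$ into $E$. Since $E$ is a fundamental domain, the orbit $\la G \ra w$ meets the \emph{interior} $E^o$ in at most one point, and meets $E$ itself in exactly one point \emph{provided} that point lies in $E^o$ rather than on $\partial E$. The bad set is the set of $z$ whose $\la G \ra$-orbit hits $\partial E$ — but $\partial E$ is a countable union of geodesic segments, hence a set of measure zero, and its preimage under the countably many isometries in $\la G \ra$ is still measure zero. So for almost all $z$, the orbit of $z$ (hence of $\gamma z$ for every $\gamma \in \la G \ra$) meets $E$ in a unique point lying in $E^o$; since $\la G \ra$ acts freely on that point (its stabilizer would fix an interior point of a fundamental domain, forcing triviality — here I would invoke that $\Gamma_0 = \{1\}$ is built into the setup, or argue directly), the element $\delta \in \la G \ra$ with $\delta w \in E$ is unique. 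As Algorithm~\ref{algred} always outputs such a $\delta\gamma$ regardless of the ordering of $G$, the output is independent of the ordering. Note one must only claim this for $\gamma \in \la G \ra$, as stated; for general $\gamma \in \Gamma$ the orbit under $\la G \ra$ need not be the full $\Gamma$-orbit and the reduced point need not be unique.

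For the second assertion, the direction $\gamma \in \la G \ra \Rightarrow \reduc_G(\gamma) = 1$ is the nontrivial one. Apply the reduction with $z = 0$: the output $\overline\gamma = \delta\gamma$ satisfies $\overline\gamma(0) \in \ext(G) = E$ and $\delta \in \la G \ra$, so $\overline\gamma \in \la G \ra$ as well; thus $\overline\gamma(0)$ is a point of $E$ in the $\la G \ra$-orbit of $0$. But $0 \in E^o$ (indeed $0$ is the center of the Dirichlet domain and has trivial stabilizer in $\Gamma \supseteq \la G \ra$), and a fundamental domain contains exactly one point of each orbit that meets its interior, so $\overline\gamma(0) = 0$; freeness of the action at $0$ gives $\overline\gamma = 1$, hence $\reduc_G(\gamma) = 1$. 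Conversely, if $\reduc_G(\gamma) = 1$ then $\gamma = \delta^{-1}\overline\gamma = \delta^{-1} \in \la G \ra$ by the algorithm's output specification ($\overline\gamma = \delta\gamma$ with $\delta \in \la G \ra$), which is immediate.

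The main obstacle is purely the bookkeeping around the boundary $\partial E$: one must be careful that ``$E$ is a fundamental domain'' gives uniqueness of orbit representatives only for orbits meeting $E^o$, and that the exceptional locus where an orbit lands on $\partial E$ is genuinely measure zero — this requires knowing $E$ has only finitely many sides, or at least that $\partial E$ is a countable union of geodesic arcs, which is part of the structure of $\ext(G)$ established in \S2. Everything else (strict distance decrease, termination, the final freeness argument at an interior point) is already in hand from the correctness proof of Algorithm~\ref{algred} and from Proposition~\ref{forddom}.
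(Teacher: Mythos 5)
Your proof is correct and takes essentially the same route as the paper's: reducedness means the reduced translate of $z$ lands in $\ext(G)$, the fundamental-domain property applied at an interior point forces that translate (hence the output) to be unique and independent of the ordering of $G$, and specializing to $z=0$, an interior point with trivial stabilizer, gives the second statement. Your description of the exceptional set (those $z$ whose $\la G \ra$-orbit meets $\partial \ext(G)$) is only a slightly more explicit variant of the paper's choice of $z$ in the orbit of the interior of $\ext(G)$.
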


Here, ``almost all'' means for all $z$ outside of a set of measure zero: it suffices to take $z$ in the $\Gamma$-orbit of the interior of $\ext(G)$.  

\begin{proof}
Suppose that $\ext(G)$ is a fundamental domain for $\la G \ra$.  Let $z$ be in the $\Gamma$-orbit of $z_0 \in \inter(\ext(G))$, let $\gamma \in \la G \ra$, and let $\overline{\gamma}=\reduc_G(\gamma;z)$.  Then by Remark \ref{rmkgred}, we have $\overline{\gamma}z \in \ext(G)$, and since $\ext(G)$ is a fundamental domain and $\Gamma z = \Gamma z_0$ with $z_0 \in \inter(\ext(G))$, we must have $\overline{\gamma}z=z_0$; in particular, $\overline{\gamma}$ is unique and independent of the ordering of $G$.  The second statement follows similarly: we have that $0,\overline{\gamma}(0) \in \inter(\ext(G))$, so if $\overline{\gamma} \neq 1$ then $\gamma \not\in \la G \ra$.
\end{proof}

Inspired by the preceding proposition, we make the following definition.

\begin{defn}
A set $G$ is a \emph{basis for $\Gamma$} if $\ext(G)$ is a fundamental domain for $\la G \ra=\Gamma$.  If $G$ is a basis that  forms a normalized boundary for $\Gamma$, then we say that $G$ is a \emph{normalized basis}.
\end{defn}

\begin{rmk} \label{wordprob}
It follows from Proposition \ref{normbas} that if one can compute a normalized basis $G$ for $\Gamma$, then one also has a solution to the word problem: given any element $\gamma \in \Gamma$, we compute $\overline{\gamma}=\reduc_G \gamma$, which by Proposition \ref{normbas} must satisfy $\overline{\gamma}=1$, so we have explicitly written $\gamma$ as a word from $G$.
\end{rmk}

We construct a normalized basis for as follows. 

\begin{alg} \label{algD}
Let $G \subset \Gamma$.  This algorithm returns a normalized basis for $\la G \ra$.
\begin{enumalg}
\item Let $G := \{g_1,\dots,g_t,g_1^{-1},\dots,g_t^{-1}\}$.
\item Compute the normalized boundary $U$ of $\ext(G)$ by Algorithm \ref{algdomG}.
\item Let $G' := U$.  For each $g \in G$, compute $\overline{g}=\reduc_{G \setminus \{g\}}(g)$ using Algorithm $\ref{algred}$. If $\overline{g} \neq 1$, set $G':=G' \cup \{\overline{g}\}$.  
\item Compute the normalized boundary $U'$ of $\ext(G')$.  If $U'=U$, set $G := G'$ and proceed to Step 5; otherwise set $U := U'$ and return to Step 3.
\item If all vertices of $E=\ext(U)$ are paired, return $U$.  Otherwise, for each $g \in G$ with a vertex $v \in I(g)$ which is not paired, compute $\overline{g} := \reduc_G(g;v)$, where if $v$ is a vertex at infinity we replace $v$ by a nearby point in $I(g^{-1}) \setminus E \subset \frakD$.  Add the reductions $\overline{g}$ for each nonpaired vertex $v$ to $G$ and return to Step $2$.
\end{enumalg}
\end{alg}

\begin{proof}[Proof of correctness]
First, note that if $v$ be a vertex of $E=\ext(G)$, then by Corollary \ref{gginv}, $v$ is a paired vertex if and only if for every side $s \subset I(g)$ containing $v$, we have that $gv \in I(g^{-1})$ is a vertex of $E$.

Next, we prove that if the algorithm terminates it does so correctly.  We construct a side pairing as in \S 1.  A side $s$ of $E$ pairs up with $gs \subset I(g^{-1})$ if and only if its vertices are paired, necessarily with the vertices of $I(g^{-1})$ by Corollary \ref{gginv}.  Therefore if we terminate in Step 5, we have in fact paired all sides of $\ext(U)$ and by Theorem \ref{sidepair}, $\ext(U)$ is a Dirichlet domain and $U$ is a basis.  

Otherwise, by Step 5 we have $v \in s$ such that $gv \not \in \ext(G)$.  We now compute $\overline{g}=\reduc_G(g; v)$, and refer to Proposition \ref{forddom}.  Since $v \in I(g)$, we have $d(v,0)=d(gv,0)$, and since $gv \not\in \ext(G)$, we have $d(gv,0) > d(\overline{g} v, 0)$.  Putting these together, we find that $v \in \inter(I(\overline{g}))$ and hence $\ext(G \cup \{\overline{g}\}) \subsetneq \ext(G)$.  

Consider now the limit of the sets $G_\infty=\lim G$ and $U_\infty=\lim U$ as we let the algorithm run forever.  Accordingly, every vertex $v$ of $\ext(U_\infty)$ must be paired, otherwise it would be caught in some step of the algorithm.  Therefore by the above, $U_\infty$ is a basis for $\la G_\infty \ra$.  But at each step of the algorithm, the group $\la G \ra$ remains the same, even as $G$ changes: indeed, in Step 3, if $\overline{g}=1$ then $g \in \la G \setminus \{g\}\ra$.  Therefore $\la G_\infty \ra = \la G \ra$, and since $\la G \ra$ is finitely generated we know that $U$ is finite, and hence the algorithm terminates after finitely many steps.
\end{proof}

We now extend this in the natural way to an arithmetic Fuchsian group $\Gamma(\calO)$.  

\begin{alg} \label{algDO}
Let $\calO$ be a quaternion order.  This algorithm returns a basis $G$ for $\Gamma=\Gamma(\calO)$.
\begin{enumalg}
\item Choose $C \in \R_{>0}$, initialize $G := \emptyset$, and compute $A=\mu(\Gamma \backslash \frakH)$.
\item Using Steps 1--2 in Algorithm \ref{enumlll}, compute the set $G(C) \subset \Gamma$.
\item Call Algorithm \ref{algD} with input $G \cup G(C)$ and let $G$ be the output.  If $\mu(\ext(G))=A<\infty$, then return $G$; otherwise, increase $C$ and return to Step 2.  
\end{enumalg}
\end{alg}

A fundamental domain for an arithmetic Fuchsian group $\Gamma \subset \Gamma(\calO)$ can easily be computed from this by first running Algorithm \ref{algDO} and then computing a coset decomposition of $\Gamma$ in $\Gamma(\calO)$; and for that reason, one may even restrict consideration to the case where $\calO$ is maximal.  

\begin{rmk}
In practice, in some cases we can improve Step 5 of Algorithm \ref{algD} for arithmetic Fuchsian groups as follows.  For each nonpaired vertex $v$, we can consider those elements with small absolute reduced norm $N$ relative to $p \in \frakD$ taken to be a point along the geodesic between $0$ and $v$: indeed, by continuity if $g \in \calO_1^*$ has $v \in \inter(I(g))$, then $\rad(g)$ increases as the center $p$ moves towards $v$ and thus $N(g)$ decreases, so using lattice reduction we are likely to find a small such $g$.
\end{rmk}

\section{Proof of the main theorem}

We are now ready to prove the main theorem of this paper.

\begin{thm}
There exists an algorithm which, given a finitely generated Fuchsian group $\Gamma$ and a point $p \in \frakH$ with $\Gamma_p=\{1\}$, returns the Dirichlet domain $D(p)$, a side pairing for $D(p)$, and a finite presentation for $\Gamma$ with a minimal set of generators.
\end{thm}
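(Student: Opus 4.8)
The plan is to run the machinery of \S\S 1--4, culminating in Algorithm \ref{algD}, on the given generating set, and then to extract a presentation from the resulting side pairing. First I would pass from $\frakH$ to the unit disc $\frakD$ by the conformal map $\phi$ of (\ref{phimap}), chosen so that $\phi(p)=0$, and replace the input generators $G \subset \SL_2(K)$ by $G^\phi \subset \SU(1,1)$; all subsequent computation takes place in $\frakD$, and the hypothesis that $\Gamma$ is exact is what allows Algorithm \ref{algred} to compare the real quantities $\rho$ and to decide equalities in $\Gamma$. Applying Algorithm \ref{algD} to $G^\phi$ then terminates and returns, by its proof of correctness, a normalized basis $U$ with $\la U\ra=\Gamma^\phi$, so that $\ext(U)$ is a fundamental domain for $\Gamma^\phi$.

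The next step is to recognize $\ext(U)$ as the Dirichlet domain. Since $U \subseteq \Gamma^\phi\setminus\{1\}$, Proposition \ref{forddom}(a) gives $\ext(U) \supseteq \ext(\Gamma^\phi\setminus\{1\})=D(0)$. Both sides are closed, hyperbolically convex fundamental domains for $\Gamma^\phi$, so both have hyperbolic area $\mu(X)<\infty$; a closed convex fundamental domain that contains another one of the same finite area must coincide with it, since otherwise there would be a point of $\inter(\ext(U))$ outside the closed set $D(0)$, hence a nonempty open set of positive measure inside $\ext(U)\setminus D(0)$. Therefore $\ext(U)=D(0)$, and $D(p)=\phi^{-1}(D(0))$ is returned either as its sequence of vertices or, via Remark \ref{HtoD}, as the normalized boundary of perpendicular bisectors. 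Because Algorithm \ref{algD} exits only once every vertex of $\ext(U)$ is paired, Corollary \ref{gginv} shows that $P=\{(g,s,gs): s\subseteq I(g),\ g\in U\}$ is a side pairing of $D(0)$, and $G(P)$ generates $\Gamma$ by Proposition \ref{sidepair}.

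To obtain a presentation I would invoke Poincar\'e's polygon theorem. As $D(0)$ is a genuine fundamental polygon equipped with the side pairing $P$, its cycle conditions hold automatically, and one reads off a presentation of $\Gamma$ whose generators are the elements of $G(P)$ and whose relators are the pairing relations $g\,g^*=1$ together with one relation $w_C^{m_C}=1$ per vertex cycle $C$ (where $w_C$ is the cycle word and $m_C\geq 1$ the order of the cycle transformation; accidental cycles have $m_C=1$, and the parabolic cycles at infinity, treated as in the proof of Proposition \ref{sidepair}, contribute no relation). The data $w_C$, $m_C$, and the elliptic/parabolic/accidental classification are computed from $D(0)$ and $P$ by the elementary angle and incidence formulae of \S 2, which in particular yields the signature $(g;m_1,\dots,m_t;s)$ of $\Gamma$. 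Finally I would normalize this geometric presentation by a finite sequence of Tietze transformations: the relations $g^*=g^{-1}$ and the accidental-cycle relations eliminate the redundant generators, bringing it to the standard form with generators $a_1,b_1,\dots,a_g,b_g,x_1,\dots,x_t,p_1,\dots,p_s$, relators $x_i^{m_i}$, and the long relator $\prod_j[a_j,b_j]\prod_i x_i\prod_l p_l$; one further uses the long relator to delete a $p_l$ (when $s\geq 1$) or an $x_i$ (when $s=0<t$), leaving a generating set of the minimal cardinality forced by the signature, minimality being a standard consequence of abelianizing, respectively of Grushko's theorem for the free products that occur when $s\geq 1$. The word problem for the output presentation then follows from Remark \ref{wordprob}: for $\gamma\in\Gamma$ one has $\reduc_U(\gamma)=1$ by Proposition \ref{normbas}, which writes $\gamma$ explicitly as a word in $U$, and the recorded Tietze moves rewrite it in the final generators.

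I expect the main obstacle to lie in this last step — reconciling the raw side-pairing presentation with the standard signature presentation and performing the elimination uniformly, in particular handling the parabolic cycles and the degenerate low-genus/few-cone-point configurations — rather than in the termination or correctness of Algorithm \ref{algD}, which is already in hand.
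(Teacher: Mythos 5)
Your first two paragraphs match the paper: running Algorithm \ref{algD} on the generators transported to $\frakD$, identifying $\ext(U)$ with $D(0)$ (the paper asserts this via Proposition \ref{sidepair} inside the correctness proof of Algorithm \ref{algD}; your containment-plus-equal-area argument is a reasonable way to make it explicit), and reading off the side pairing and generators. The gap is in the final step, and it is exactly the step where the paper's Section 5 has its actual content. You propose to pass from the raw side-pairing/cycle presentation to the canonical signature presentation ``by a finite sequence of Tietze transformations'' and to delete one generator at the end; but you give no algorithm for finding that sequence and no proof that the eliminations can always be carried out, and you yourself flag this as the expected obstacle. Producing the canonical form (\ref{gens})--(\ref{relats}) algorithmically amounts to the classification algorithm for the quotient $2$-orbifold, which the paper deliberately does not do --- it explicitly defers canonical presentations to Imbert's fat-graph method and claims only a \emph{minimal} presentation, not a canonical one. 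So your route both overshoots the target and leaves its key step unproved.

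What the paper supplies instead, and what is missing from your proposal, is: (i) Proposition \ref{areazero} and Remark \ref{exceptp2}, guaranteeing that for $p$ outside a set of measure zero every elliptic and parabolic cycle has length $1$ and every accidental cycle has length $3$, so the elliptic relations are visible among the minimal cycles computed by Algorithm \ref{mincycles}; and (ii) Algorithm \ref{minrelations}, which eliminates redundant generators by back substitution using the accidental-cycle relations, together with Lemma \ref{freeproduct}, which is the crux: if at some stage no remaining accidental relation shared a generator with the accumulated relation $r$, then $\Gamma$ would split as a free product $\Gamma_1 * \Gamma_2$ with all the $\gamma_i$ lying in the factors, and the Mayer--Vietoris/homology argument forces one factor to be a free product of cyclic groups, which cannot contain an accidental cycle --- a contradiction. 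This lemma is what guarantees the elimination never gets stuck, and the resulting count of $t+1$ relations is what certifies minimality of the generating set. Your appeal to abelianization/Grushko addresses why the standard presentation is minimal, but not how to reach any minimal presentation algorithmically from the computed side pairing; that is the missing idea.
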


To prove the theorem, we need to show how the output of Algorithm \ref{algD} yields a finite presentation for $\Gamma$ with a minimal set of generators.  Indeed, Algorithm \ref{algD} terminates only if it has computed a side pairing $P$ (which we may assume meets the convention in \S 1) for the Dirichlet domain $D$.  Such a side pairing $P$ gives a set $G$ of generators for $\Gamma$ by Proposition \ref{sidepair}.  

We now consider the induced relation on the set of vertices.  A \emph{cycle} of $D$ is a sequence $v_1,\dots,v_n=v_1$ which is the (ordered) intersection of the $\Gamma$-orbit of $v=v_1$ with $D$.  To each cycle, we associate the word $g=g_ng_{n-1}\cdots g_2g_1$ where $g_i(v_i)=v_{i+1}$ and the indices are taken modulo $n$.  We say that a cycle is a \emph{pairing cycle} if $g_i \in G$ for all $i$, and without further mention we shall assume from now on that a cycle is a pairing cycle.

A cycle is \emph{minimal} if $v_i \neq v_j$ for all $i \neq j$.  Every vertex $v$ of $D$ is contained in a unique minimal cycle (up to reversion and cyclic permutation).  Indeed, by the uniqueness of the side pairing, a vertex $v \in I(g) \cap I(g')$ either has $v=gv=g'v$, in which case $v$ has nontrivial stabilizer and one has the singleton cycle $v$, or $v$ has trivial stabilizer and is paired with the distinct elements $gv \in I(g^{-1})$ and $g'v \in I(g'^{-1})$, each of which also has trivial stabilizer, and then continuing in this way one constructs a (unique minimal) cycle.  This analysis gives rise to the following algorithm.

\begin{alg} \label{mincycles}
Let $P$ be a side pairing for a Dirichlet domain $D$ for $\Gamma$.  This algorithm returns a set of minimal cycles for $D$.
\begin{enumalg}
\item Initialize $V$ to be the set of vertices of $D$ and $M := \emptyset$.
\item If $V = \emptyset$, terminate.  Otherwise, choose $v \in V$ with $v=I(g) \cap I(g')$ for $g,g' \in G(P)$.  If $gv=v$, add the cycle $v$ to $M$ and remove $v$ from $V$, and return to Step 2.  Otherwise, let $i := 1$ and $v_1 := v$.
\item Let $v_{i+1} := gv_i \in I(g^{-1}) \cap I(g')$.  If $v_{i+1}=v_1$, add the cycle $v_1,\dots,v_i$ to $M$, remove these elements from $V$, and return to Step 2; otherwise, increment $i := i+1$, let $g := g'$ and return to Step 3.
\end{enumalg}
\end{alg}

The relations associated to minimal cycles have the following important property.

\begin{lem}
Let $g \in G$ be a side-pairing element.  Then $g$ appears at most once in any word associated to a minimal cycle.  Moreover, $g$ and its inverse appears in exactly two such words.
\end{lem}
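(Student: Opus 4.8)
The plan is to analyze how each side-pairing element $g$ can appear in a word $g_n g_{n-1} \cdots g_1$ attached to a minimal cycle $v_1, \dots, v_n = v_1$. Recall that $g_i$ is determined by the condition $g_i(v_i) = v_{i+1}$, together with the requirement that $v_i$ lies on a side $I(g_i)$; by Corollary \ref{gginv}, $g_i(v_i) = v_{i+1}$ then lies on $I(g_i^{-1})$. So each index $i$ in the cycle picks out, at the vertex $v_i$, an \emph{ordered pair} of the two sides of $D$ meeting at $v_i$: the side $I(g_i)$ that one ``leaves along'' and the side $I(g_{i-1}^{-1})$ that one ``arrived along.'' First I would make this precise: the data of the cycle is equivalent to a walk on the set of (vertex, incident side) flags, where from the flag $(v_i, I(g_i))$ one moves to the flag $(g_i v_i, I(g_i^{-1})) = (v_{i+1}, I(g_i^{-1}))$, and then the side-pairing convention forces the next step to use the \emph{other} side incident to $v_{i+1}$. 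Since each vertex of the Dirichlet domain is incident to exactly two sides, each such flag has a unique successor, and the walk closes up into the minimal cycle.

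Given this, suppose $g$ appeared twice in the word associated to a single minimal cycle, say $g_i = g_j = g$ with $i \neq j$. Then the walk passes through the flag $(v_i, I(g))$ and also through $(v_j, I(g))$. I would argue these two flags must coincide: the predecessor step determines $v_i$ and $v_j$ as the endpoints of the sides $I(g)$ reached by the walk, but in fact it is cleaner to run the argument forward — from the flag $(v_i, I(g))$ the entire rest of the walk is determined, and likewise from $(v_j, I(g))$; if the two flags were distinct we would have two different walks both containing $I(g)$ as a ``leaving'' side, contradicting the uniqueness of the successor/predecessor structure and hence the uniqueness of the minimal cycle through any flag. Therefore $v_i = v_j$, contradicting minimality ($v_i \neq v_j$ for $i \neq j$). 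This proves the first assertion.

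For the second assertion, I would count incidences. The side $I(g)$ has two endpoints, each a vertex of $D$; these endpoints are distinct (a side is a genuine geodesic segment, with the order-$2$ convention of \S 1 already accounting for the degenerate case), and each lies in a unique minimal cycle. At the endpoint $v$ of $I(g)$ that is the \emph{initial} vertex of the oriented side, the walk of that vertex's minimal cycle uses $I(g)$ as a leaving side, contributing one occurrence of $g$; at the \emph{terminal} endpoint $w$, the walk of $w$'s minimal cycle arrives along $I(g)$, hence uses $I(g^{-1})$ as a leaving side, contributing one occurrence of $g^{-1}$. By the first part, within each of these (one or two) cycles the relevant letter appears exactly once. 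What remains is to check the two remaining bookkeeping cases: that $v = w$ is impossible (endpoints of a side are distinct), and that if the two endpoints happen to lie in the \emph{same} minimal cycle, that cycle's word still contains $g$ exactly once and $g^{-1}$ exactly once — which again follows from the first part applied to $g$ and to $g^{-1}$ separately, since the flag $(v, I(g))$ and the flag $(w, I(g^{-1}))$ are distinct flags. Hence $g$ and $g^{-1}$ together appear in exactly two cycle words (counting with multiplicity: possibly the same cycle word twice), completing the proof.

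The main obstacle I anticipate is the first assertion — pinning down rigorously why two occurrences of $g$ in one minimal cycle force a repeated vertex. The cleanest route is the ``flag uniqueness'' observation: at a Dirichlet-domain vertex there are exactly two incident sides, so the pairing structure makes the set of flags into a disjoint union of cycles, and the minimal cycle through $v$ corresponds bijectively to the flag cycle through $(v, \cdot)$; any repeated letter would mean a flag is visited twice in one cycle, which is impossible. Setting up this correspondence carefully (and handling the order-$2$ elliptic convention so that ``exactly two incident sides'' holds without exception) is where the real work lies; everything else is incidence counting.
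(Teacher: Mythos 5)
The crux of the lemma is the first assertion, and that is exactly where your argument has a genuine gap. An occurrence of $g$ in a cycle word is a flag $(v,s)$ where $s \subset I(g)$ is the unique side paired by $g$ and $v$ is one of its two endpoints; so two occurrences of $g$ in one word correspond to two \emph{distinct} flags, sitting at the two distinct endpoints of the same side $s$ --- not to one flag visited twice. Your justification that these flags ``must coincide'' (otherwise ``two different walks'' would contain $I(g)$ as a leaving side, contradicting successor/predecessor uniqueness) does not hold up: uniqueness of successors and predecessors only says that the set of flags decomposes into disjoint cycles, and nothing in that structure prevents a single cycle from containing two distinct flags whose leaving sides lie on the same isometric circle; the two flags, lying on one cycle, determine the same cyclic walk, so no contradiction arises. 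In other words, the statement you need --- that one directed cycle cannot leave along $s$ from both of its endpoints --- is precisely the first assertion, and your reduction of ``repeated letter'' to ``repeated flag'' is where the proof becomes circular.

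What is missing is a geometric input beyond the combinatorics of the walk. The paper supplies it by arguing that two occurrences of $g$ in one minimal cycle would force $g$ to map $I(g)$ to itself, hence to have order $2$ with its fixed point a vertex of $s$, contradicting the convention of \S 1 and minimality. Alternatively, your flag formalism can be completed by an orientation argument: since $g$ preserves the orientation of $\frakD$ but maps $D$ across $s^*$, it sends the terminal endpoint of $s$ (with respect to the counterclockwise orientation of $\partial D$) to the initial endpoint of $s^*$; it follows that along a directed cycle the walk always leaves each side from the same end (say the terminal one), while the two candidate flags for $g$ sit at opposite ends of $s$, so at most one of them can occur in any one directed cycle word. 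Your counting for the second assertion is fine in outline and is close to the paper's (each endpoint of $s$ lies in a unique minimal cycle), but as written it invokes the first assertion, so the gap propagates there as well.
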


\begin{proof}
By definition, a side-pairing element $g$ pairs a unique set of sides: in particular, $g$ pairs the vertices of one side $s$ with the vertices of another.  Suppose that $g$ occurs twice in a word associated to a minimal cycle.  Then by minimality, the vertices of $s$ are in the same $\Gamma$ orbit.  But this implies that $g$ maps $I(g)$ to itself, so $g$ has order $2$ and therefore one of the vertices of $s$ is fixed by $g$, a contradiction.

In a similar way, we see that $g$ and its inverse can appear in at most two words since each vertex belongs to exactly one minimal cycle.
\end{proof}

We have the following characterization of the minimal cycles.

\begin{prop}{Beardon \cite[Theorem 9.4.5]{Beardon}} \label{areazero}
For all $p \in \frakH$ outside of a set of area zero, the following statements hold:
\begin{enumroman}
\item Every elliptic cycle has length $1$;
\item Every accidental cycle has length $3$; and
\item Every parabolic cycle has length $1$.
\end{enumroman}
\end{prop}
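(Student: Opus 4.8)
The plan is to recast the side-pairing combinatorics of $D(p)$ in terms of the nearest-point geometry of the orbit $\Gamma p$, using that $D(p)=\{z\in\frakH : d(z,p)\le d(z,\gamma p)\text{ for all }\gamma\in\Gamma\}$ is the Voronoi (nearest-point) cell of $p$. For a vertex $v$ of $D(p)$ set $O(v)=\{\gamma p\in\Gamma p : d(v,\gamma p)=d(v,p)\}$; these points lie on a hyperbolic circle centred at $v$, and $\#O(v)\ge 3$ because $v$ is a vertex. First I would record two basic facts: (a) the two sides of $D(p)$ incident to a proper vertex $v$ are the perpendicular bisectors of $[p,\gamma p]$ for the two $\gamma p\in O(v)$ adjacent to $p$ on this circle; and (b) $O(v)$ is $\Gamma_v$-invariant, hence a union of $\Gamma_v$-orbits in $\Gamma p$, and when $\Gamma_v\ne\{1\}$ it contains the orbit $\Gamma_v p$, which has $\#\Gamma_v$ elements since $\Gamma_p=\{1\}$.

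Next, the decisive point is the identity $\Gamma v\cap D(p)=\{\gamma v : \gamma^{-1}p\in O(v)\}$, immediate from $d(\gamma v,gp)=d(v,\gamma^{-1}gp)$ and the definition of $D(p)$. Since the cycle containing $v$ is, by definition, $\Gamma v\cap D(p)$, this shows its length equals the number of distinct $\Gamma_v$-orbits contained in $O(v)$. Thus the theorem reduces to proving that, for $p$ outside a set of measure zero, $\#O(v)=3$ whenever $\Gamma_v=\{1\}$ (forcing accidental cycles to have length $3$) and $O(v)=\Gamma_v p$ whenever $\Gamma_v\ne\{1\}$ (forcing elliptic and parabolic cycles to have length $1$).

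To locate the exceptional $p$: if $\Gamma_v=\{1\}$ and $\#O(v)\ge 4$, there are distinct $1,\gamma_1,\gamma_2,\gamma_3\in\Gamma$ with $p,\gamma_1 p,\gamma_2 p,\gamma_3 p$ concyclic, and for each fixed quadruple the set of such $p$ is the zero locus of a nonconstant meromorphic function of $p\in\frakH$ --- hence of measure zero --- unless concyclicity holds identically in $p$, which happens exactly when the quadruple is a coset of a cyclic subgroup fixing a common point, i.e.\ precisely the elliptic or parabolic situation. Taking the countable union over quadruples, and likewise discarding the measure-zero loci where $O(v)$ acquires a second $\Gamma_v$-orbit in the elliptic/parabolic case, or where an elliptic fixed point or a cusp fails to be realized as a single vertex of $D(p)$, produces the exceptional set; the length assertions then follow from the orbit count. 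As a sanity check I would revisit the standard domain for $\PSL_2(\Z)$ centred at a point on the imaginary axis, which carries an order-$3$ elliptic cycle of length $2$ --- exactly the degeneracy the genericity hypothesis removes.

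The hard part will be the parabolic case. When $v$ is a cusp, $d(v,\gamma p)$ is infinite, so the distance comparisons must be replaced by comparisons of Busemann functions at $v$ (equivalently, heights relative to a horocycle), and one must verify that near $v$ the cell $D(p)$ is controlled solely by the parabolic subgroup $\Gamma_v$ and not perturbed by other orbit points accumulating at $v$; this is where cofiniteness of the area enters, via the existence of a precisely-cusped neighbourhood of $v$ meeting $\Gamma p$ in only a single $\Gamma_v$-orbit. The elliptic case needs the milder analogue --- that the angle $2\pi/\#\Gamma_v$ at the fixed point is not split among several vertices --- and the accidental case is then just the orbit count together with the concyclicity bookkeeping above.
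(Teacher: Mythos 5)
The paper does not actually prove Proposition \ref{areazero}: it is quoted directly from Beardon \cite[Theorem 9.4.5]{Beardon}, and the only trace of an argument in the text is Remark \ref{exceptp2}, which records the exceptional set as the union $E_2$ of loci where a cross-ratio of orbit points is real. Measured against that, your proposal follows the standard route rather than a new one, and its skeleton is sound: the Voronoi-cell bookkeeping (the cycle of a vertex $v$ has length equal to the number of $\Gamma_v$-orbits among the nearest orbit points $O(v)$) is correct, your concyclicity locus in the accidental case is literally the set $E_2$ of Remark \ref{exceptp2}, the elliptic count (an extra $\Gamma_v$-orbit forces $p$ onto the perpendicular bisector of $[v,\gamma^{-1}v]$, a geodesic, and there are only countably many pairs $(v,\gamma)$) goes through, and the $\PSL_2(\Z)$ example is exactly the right sanity check.

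Two steps, however, are not yet proofs. First, your classification of the triples for which concyclicity holds \emph{identically} (``a coset of a cyclic subgroup fixing a common point, i.e.\ the elliptic or parabolic situation'') is wrong as stated: three hyperbolic elements sharing an axis also place $z,\gamma_1 z,\gamma_2 z,\gamma_3 z$ on a common Euclidean circle (a hypercycle) for every $z$, so the cross-ratio is constant and real there as well. The degenerate triples must be disposed of explicitly, which is quick: a hyperbolic circle centred at a candidate vertex meets a horocycle or a hypercycle in at most two points (they are distinct Euclidean circles), so in the parabolic and common-axis cases no point of $\frakH$ is equidistant from four such orbit points; and in the elliptic case the only possible circumcentre of the four distinct points is the common fixed point, whose stabilizer is nontrivial, so that vertex is not accidental. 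Second, the parabolic case is explicitly left as a plan. The Busemann replacement you name does close it, and more cheaply than the ``precisely cusped neighbourhood'' you invoke: the cusps form a countable set independent of $p$; the bisector of $[p,\gamma p]$ has an endpoint at a cusp $v$ exactly when $p$ and $\gamma p$ lie on a common horocycle centred at $v$; and for fixed $v$ and $\gamma\notin\Gamma_v$ the set of such $p$ is the zero locus of the difference of the Busemann function at $v$ evaluated at $p$ and at $\gamma p$, a real-analytic function of $p$ that vanishes identically only if $\gamma$ preserves every horocycle at $v$, i.e.\ only if $\gamma\in\Gamma_v$. Adjoining these countably many measure-zero loci to your exceptional set gives $O(v)=\Gamma_v p$ at every cusp, hence parabolic cycles of length $1$. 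With those two repairs your argument is a complete proof along the same lines as the one the paper cites.
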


\begin{rmk} \label{exceptp2}
The exceptional set of $p$ is contained in the union
\[ E_2=\bigcup_{f,g,h \in \Gamma} \{z : R(z) \in \R\} \]
over all triples $f,g,h \in \Gamma$ such that
\[ R(z)=\frac{(z-gz)(fz-hz)}{(z-fz)(gz-hz)} \]
is not constant.  It is easy to see that the set $E_2$ has area zero.  
\end{rmk}

For the purposes of computing a minimal set of generators and relations, we may and do assume that $p$ does not lie in the exceptional set; indeed, a sufficiently general choice of $p$ will suffice, and so in practice the conditions of Proposition \ref{areazero} always hold.  In particular, every elliptic cycle is represented by a minimal cycle (whose fixed point is a vertex of $D$).

Now, to each cycle, associated to the word $g$, we further associate a relation in $\Gamma$ as follows.  By definition, we have $g \in \Gamma_v$, and therefore we have one of three possibilities.  If $\#\Gamma_v=1$, then we have the relation $g=1$; we call $g$ an \emph{accidental cycle}.  If $1<\#\Gamma_v<\infty$, then we associate the relation $g^k=1$ where $k$ is the order of $g$, and we call $g$ an \emph{elliptic cycle}.  Otherwise, if $\#\Gamma_v=\infty$, then we associate the empty relation, a \emph{parabolic cycle}.  We note that the latter occurs if and only if $g$ has infinite order if and only if $\trd(g)=\pm 2$, so the relation $g$ is computable.

We now appeal to the structure theory for Fuchsian groups with cofinite area \cite[\S 4.3]{Katok}.  Suppose that $\Gamma$ has exactly $t$ elliptic cycles of orders $m_1,\dots,m_t \in \Z_{\geq 2}$ and $s$ parabolic cycles, and that $X=\Gamma \backslash \frakH$ has genus $g$.  We say then that $\Gamma$ has signature $(g;m_1,\dots,m_t;s)$.  Moreover, $\Gamma$ is generated by elements
\begin{equation} \label{gens}
\alpha_1,\dots,\alpha_g,\beta_1,\dots,\beta_g,\gamma_1,\dots,\gamma_t,\gamma_{t+1},\dots,\gamma_{t+s}
\end{equation}
subject to the relations
\begin{equation} \label{relats}
\gamma_1^{m_1}=\dots=\gamma_t^{m_t}=[\alpha_1,\beta_1] \cdots [\alpha_g,\beta_g]\gamma_1\cdots \gamma_{t+s} =1,
\end{equation}
where $[\alpha,\beta]=\alpha\beta\alpha^{-1}\beta^{-1}$ is the commutator.  (One obtains a minimal set of generators from this presentation by eliminating $\gamma_{t+s}$ whenever $t+s>0$.)

From the set of generators coming from the side-pairing elements and the set of relations coming from the minimal cycles, we can build a minimal set of generators and relations by ``back substitution''.  First, we prove a lemma.

\begin{lem} \label{freeproduct}
Suppose $\Gamma \cong \Gamma_1 * \Gamma_2$ is a free product, and that $\gamma_i \in \Gamma_1$ or $\Gamma_2$ for $i=1,\dots,s+t$.  Then either $\Gamma_1$ or $\Gamma_2$ is isomorphic to the free product of cyclic groups.
\end{lem}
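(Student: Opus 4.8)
The plan is to prove the slightly stronger statement that \emph{both} factors $\Gamma_1$ and $\Gamma_2$ are free products of cyclic groups, arguing from the structure theory of Fuchsian groups rather than from the explicit presentation; this also shows the hypothesis on the $\gamma_i$ to be superfluous. If one of the factors is trivial there is nothing to prove (the trivial group is an empty free product of cyclic groups), so assume $\Gamma_1,\Gamma_2\neq\{1\}$. The one elementary fact I would check by hand is that in a nontrivial free product each factor has \emph{infinite} index: picking $a\in\Gamma_1\setminus\{1\}$ and $b\in\Gamma_2\setminus\{1\}$, the normal forms of the powers $(ab)^k$ for $k\geq 0$ all have $\Gamma_1$-syllables, so no two of them differ by an element of $\Gamma_2$; hence $[\Gamma:\Gamma_2]=\infty$, and symmetrically $[\Gamma:\Gamma_1]=\infty$.

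From there the argument is short. The map $\Gamma\to\Gamma_i$ killing the other factor is a retraction, so $\Gamma_i$ is a quotient of the finitely generated group $\Gamma$ and is itself finitely generated; being a subgroup of a Fuchsian group it is again a Fuchsian group. Since $\Gamma$ has finite coarea and $[\Gamma:\Gamma_i]=\infty$, the quotient $\Gamma_i\backslash\frakH$ has infinite area, so $\Gamma_i$ is not cofinite; by Siegel's theorem a finitely generated Fuchsian group of the first kind has finite coarea (see \cite[\S 4.1]{Katok}), so $\Gamma_i$ is elementary or of the second kind. In either case the structure theorem for finitely generated Fuchsian groups \cite[\S 4.3]{Katok} presents $\Gamma_i$ as a free product of a free group and finitely many finite cyclic groups, i.e.\ as a free product of cyclic groups. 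This proves the lemma.

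It is worth recording how the stated hypothesis gives the conclusion more directly, since that is presumably how it will be applied. Quotienting $\Gamma$ by the normal closure of $\gamma_1,\dots,\gamma_{t+s}$ turns the presentation (\ref{gens})--(\ref{relats}) into $\langle\alpha_1,\dots,\beta_g\mid[\alpha_1,\beta_1]\cdots[\alpha_g,\beta_g]=1\rangle=\pi_1(\Sigma_g)$, the genus-$g$ closed surface group; and since each $\gamma_i$ lies in a factor, splitting them into $S_1\subset\Gamma_1$, $S_2\subset\Gamma_2$ and applying the universal property of the free product identifies this quotient with $(\Gamma_1/\langle\langle S_1\rangle\rangle)*(\Gamma_2/\langle\langle S_2\rangle\rangle)$. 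For $g\geq 1$ the surface group $\pi_1(\Sigma_g)$ is freely indecomposable (it is one-ended), so one of the two quotients is trivial; since the $\gamma_i$ are elliptic or parabolic this forces the corresponding factor to have genus $0$, while for $g=0$ one eliminates $\gamma_{t+s}$ from the presentation to see that $\Gamma$ itself is a free product of cyclic groups as soon as $s\geq 1$ (which must hold, a cocompact genus-$0$ Fuchsian group with $t\geq 3$ being freely indecomposable), whence so is every subgroup by the Kurosh subgroup theorem. The step I expect to be fussiest is exactly this last layer---disposing of the degenerate small cases of $g$, $t$, $s$ and invoking free indecomposability of surface groups and of cocompact genus-$0$ Fuchsian groups---which is why I would prefer the first approach, where almost all of it is absorbed into the single classical statement that a finitely generated Fuchsian group which is not of the first kind is a free product of cyclic groups.
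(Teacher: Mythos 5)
Your argument is correct, but it takes a genuinely different route from the paper. The paper uses the hypothesis on the $\gamma_i$ in an essential way: it passes to the quotient of $\Gamma$ by the $\gamma_i$ (which, since each $\gamma_i$ lies in a factor, is again a free product of the corresponding quotients), so that the quotient of $\Gamma$ is the fundamental group of the closed orientable surface of genus $g$, and then uses group homology and the Mayer--Vietoris sequence for a free product (via Brown \cite{Brown}) to conclude that one of the two quotient factors must vanish, from which the lemma is extracted. You instead bypass the presentation entirely: after the (correct) normal-form argument that each factor of a nontrivial free product has infinite index, you observe that each nontrivial factor is a finitely generated Fuchsian group of infinite coarea, hence not of the first kind, hence by the classical structure theory a free product of cyclic groups. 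This buys a stronger conclusion (both nontrivial factors are free products of cyclic groups, and the hypothesis on the $\gamma_i$ is irrelevant to the statement), which is if anything more convenient for the application in Algorithm \ref{minrelations}; the cost is heavier classical input, and your attributions there need care: ``finitely generated of the first kind implies cofinite'' is not Siegel's theorem (Siegel's theorem, as cited in \S 1, is the converse), and the structure theorem for finitely generated groups of the second kind is not covered by the paper's \cite{Katok} \S 4.3 reference, which concerns the cofinite case---both facts are standard but should be cited precisely (e.g.\ Beardon \cite{Beardon}). Finally, note that your closing sketch from the presentation (\ref{gens})--(\ref{relats}) is incomplete as written: for $g \geq 1$ the step ``the corresponding factor has genus $0$'' does not by itself yield a free product of cyclic groups (cocompact genus-zero groups, e.g.\ triangle groups, are not free products of cyclics), so the degenerate cases you flag are genuinely where the work lies on that route; since you rest the proof on the first argument, the proof you give stands.
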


\begin{proof}
Let $\phi:\Gamma \xrightarrow{\sim} \Gamma_1 * \Gamma_2$ be an isomorphism.  Passing to the quotient by the $\gamma_i$, for $i=1,\dots,t+s$, we may assume that $s=t=0$.  But then the homology groups $H_i(\Gamma,\Z)$ (coming from group homology) coincide with the homology groups $H_i(Y,\Z)$ (coming from topology) where $Y$ is the orientable surface of genus $g$ \cite[\S II.4]{Brown}; in particular, we have $H_0(\Gamma,\Z)=\Z$.  By the Mayer-Vietoris sequence \cite[Corollary II.7.7]{Brown}, we have 
\[ \Z \cong H_0(\Gamma,\Z) \cong H_0(\Gamma_1*\Gamma_2,\Z) \cong H_0(\Gamma_1,\Z) \oplus H_0(\Gamma_2,\Z) \]
so say $H_0(\Gamma_2,\Z)=0$; but this immediately implies $\Gamma_2$ is trivial as well, and the result now follows.
\end{proof}

\begin{alg} \label{minrelations}
Let $P$ be a side pairing for $D$ and let $M$ be a set of minimal cycles for $D$.  This algorithm returns a minimal set of generators and relations for $\Gamma$.
\begin{enumalg}
\item Let $H \subset G(P)$ be such that $g \in G$ implies either $g=g^{-1}$ or $g^{-1} \not \in G$.  
\item Let $R$ be the set of elliptic cycles in $M$ and let $A$ be the set of accidental cycles.  Initialize $r$ to be an element of $A$ and remove $r$ from $A$.
\item If $A = \emptyset$, add $r$ to $R$ and return the generators $H$ and the relations $R$.  Otherwise, choose an element $g \in A$ such that $g$ and $r$ have an element $g_i \in H$ in common; then solve for $g_i$, substitute this expression in for $g_i$ in the relation $r$, and remove $g_i$ from $H$.  Return to Step 3.
\end{enumalg}
\end{alg}

\begin{proof}[Proof of correctness]
If in Step $3$ there is always an element $g \in A$ such that $g$ and $r$ have an element in common, then the algorithm terminates correctly: in the notation of (\ref{gens}--\ref{relats}), there are exactly $t+1$ relations, and hence the set of generators must also be minimal.

So suppose otherwise.  Let $H_1$ be the set of $g \in H$ such that $g$ or $g^{-1}$ occurs in the relation $r$ and let $H_2=H \setminus H_1$.  Let $\Gamma_1,\Gamma_2$ be the groups generated by $H_1,H_2$.  Then by assumption, $\Gamma$ is the free product of $\Gamma_1$ and $\Gamma_2$.  By Lemma \ref{freeproduct}, since the relation in $\Gamma_1$ is nontrivial, it follows that $\Gamma_2$ is the free product of finite cyclic groups, and hence cannot contain any accidental cycles, which is a contradiction.  
\end{proof}

The minimal presentation resulting from Algorithm \ref{minrelations} is not necessarily of the form (\ref{gens})--(\ref{relats}); we refer to the methods of Imbert \cite{Imbert} for an alternative approach using fat graphs which computes such a canonical presentation.

This completes the proof of the theorem and the accompanying corollaries in the introduction.  

\begin{rmk}
If in the first corollary, one wants the structure of $\calO^*$, we use the exact sequence
\[ 1 \to \Z_F^{*2} \calO_1^* \to \calO^* \xrightarrow{\nrd} \Z_{F,+}^*/\Z_F^{*2} \to 1 \]
where $\Z_{F,+}^*=\{u \in \Z_F^* : v(u)>0\text{ for all ramified places $v \mid \infty$}\}$.
From the solution to the word problem, it then suffices to find elements $\gamma \in \calO^*$ such that $\nrd(\gamma)=u$ generates the finite group $\Z_{F,+}^*/\Z_F^{*2}$, and these can be found using the methods of \S 3.
\end{rmk}

\section{Examples} \label{examples}

We have implemented a variant of the above algorithm in the computer system \textsf{Magma} \cite{Magma}.  In this section, we provide two examples of the output of this algorithm.

\begin{figure}[h]
\begin{center}
\includegraphics[width=4.5in]{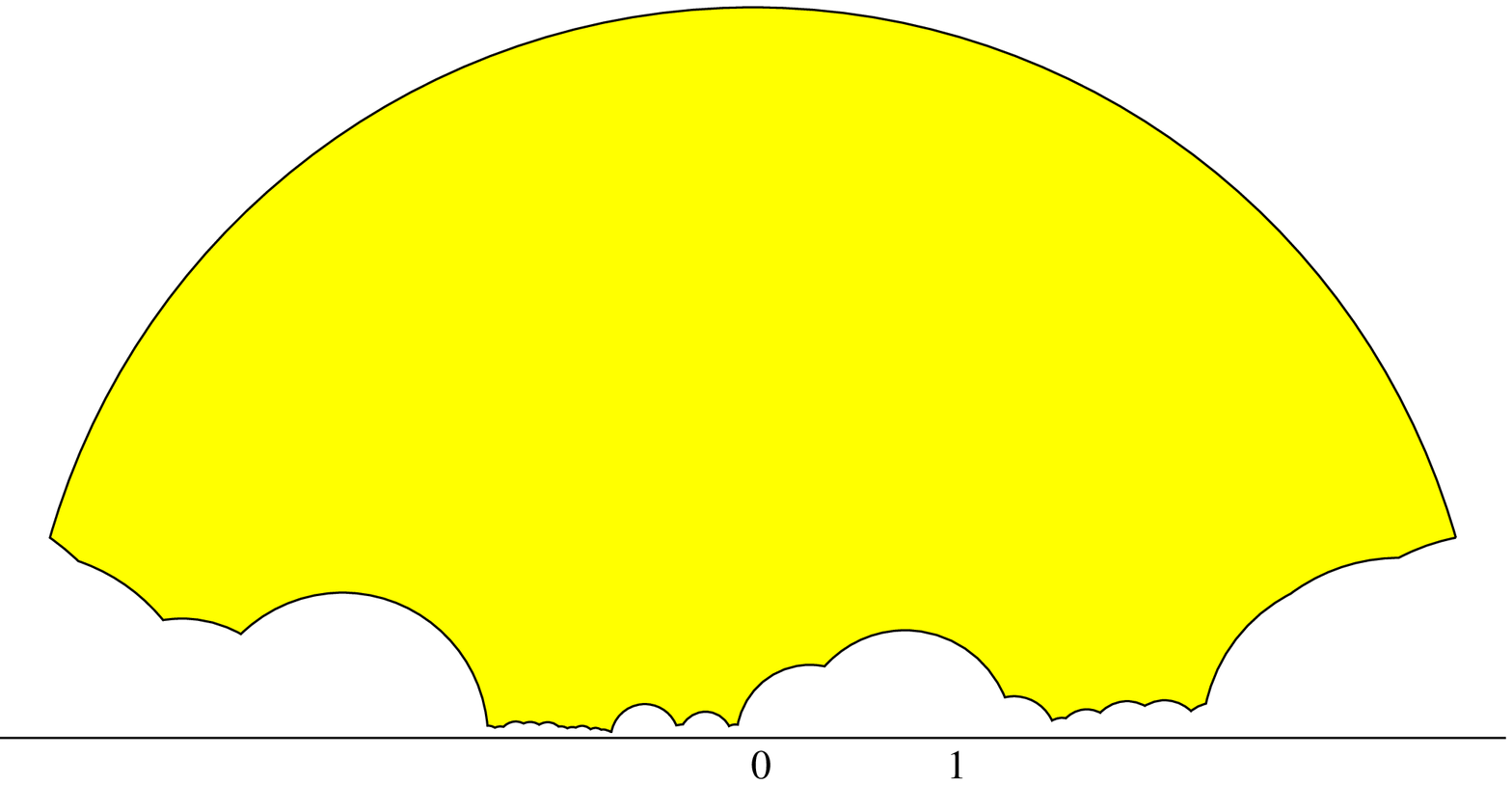} \\
\textbf{Figure \ref{examples}.1}: A Dirichlet domain for the arithmetic Fuchsian group $\Gamma_0^{6}(13)$
\end{center}
\end{figure}

First, we consider the quaternion algebra $B=\quat{3,-1}{\Q}$ of discriminant $6$.  A maximal order $\calO$ is given by
\[ \calO = \Z \oplus \Z\alpha \oplus \Z\beta \oplus \Z\frac{1+\alpha+\beta+\alpha\beta}{2}. \]
We consider the Eichler order contained in $\calO$ of level $13$, given by
\begin{align*} 
\calO(13) &= \Z \oplus \Z\frac{3-5\alpha-5\beta+3\alpha\beta}{2} \oplus \Z(2-2\alpha-\beta+\alpha\beta) \\ & \qquad \oplus \Z\frac{13-13\alpha-13\beta+13\alpha\beta}{2}.
\end{align*}
We denote $\Gamma(\calO)=\Gamma_0^{6}(13)$.  We embed $B \hookrightarrow M_2(\R)$ by the embedding (\ref{embedmin}), and take $p=9i/10 \in \frakH$.  By (\ref{shimizu}), we compute that the Fuchsian group $\Gamma_0^{6}(13)$ has coarea $14/3$.

Step 2 in Algorithm \ref{algDO} finds the units $(1-\alpha-3\beta+\alpha\beta)/2,\alpha-2\beta,\dots$, and following the algorithm, reduction and further enumeration automatically yields the fundamental domain as in Figure \ref{examples}.1.  (The methods in \textsf{Magma} for producing the postscript graphic are due to Helena Verrill \cite{Verrill}.)

This domain already exhibits significant complexity: it has $38$ sides and hence $19$ side-pairing elements, which yields a set of $10$ minimal generators $\gamma_1,\dots,\gamma_{10}$ for $\Gamma_0^{6}(13)$, namely
\begin{center}
$12-7\alpha+4\beta+2\alpha\beta,\ (1-\alpha-33\beta-19\alpha\beta)/2,\ 2\alpha+16\beta+9\alpha\beta$, \\
$(37-19\alpha+9\beta+11\alpha\beta)/2,\ 2\alpha + 4\beta + \alpha\beta,\ (1-\alpha-3\beta+\alpha\beta)/2$, \\
$\alpha-2\beta,\ (1+7\alpha-15\beta-5\alpha\beta)/2,\ (1+7\alpha-45\beta-25\alpha\beta)/2,\ \alpha-14\beta-8\alpha\beta$,
\end{center}
subject to the relations
\begin{eqnarray*}
\gamma_3^2=\gamma_5^2=\gamma_7^2=\gamma_{10}^2=\gamma_2^3=\gamma_6^3=\gamma_8^3=\gamma_9^3=1 \\
\gamma_1^{-1}\gamma_4\gamma_5\gamma_6^{-1}\gamma_1\gamma_2^{-1}\gamma_3\gamma_4^{-1}\gamma_7\gamma_8^{-1}\gamma_9^{-1}\gamma_{10}^{-1}=1.
\end{eqnarray*}
We deduce that $\Gamma_0^{6}(13)$ has signature $(1;2,2,2,2,3,3,3,3;0)$, a fact which can be independently verified by well-known formulae \cite{AB}.

Second, we consider the totally real number field $F$ generated by a root $t$ of the polynomial $x^7-x^6-6x^5+4x^4+10x^3-4x^2-4x+1$; it is the minimal septic totally real field, having discriminant $d_F=20134393=71 \cdot 283583$.  We consider the quaternion algebra $B$ which is ramified at $6$ of the $7$ real places of $F$ and no finite place: explicitly, $B=\quat{h,k}{F}$ where $h=-t^6+6t^4+t^3-9t^2-3t+1$ and $k=-t^2+2t-1$, and in fact $h,k \in \Z_F^*$.  We compute a maximal order $\calO$ of $B$.  Letting $\Gamma=\Gamma(\calO)$, we see that $\Gamma$ has coarea $5/2$.  The output of Algorithm \ref{algDO} in this case is given in Figure \ref{examples}.2; we find that $\Gamma$ has signature $(0;2,2,2,2,2,3,3,3;0)$.

\begin{figure}[ht]
\begin{center}
\includegraphics[width=4.5in]{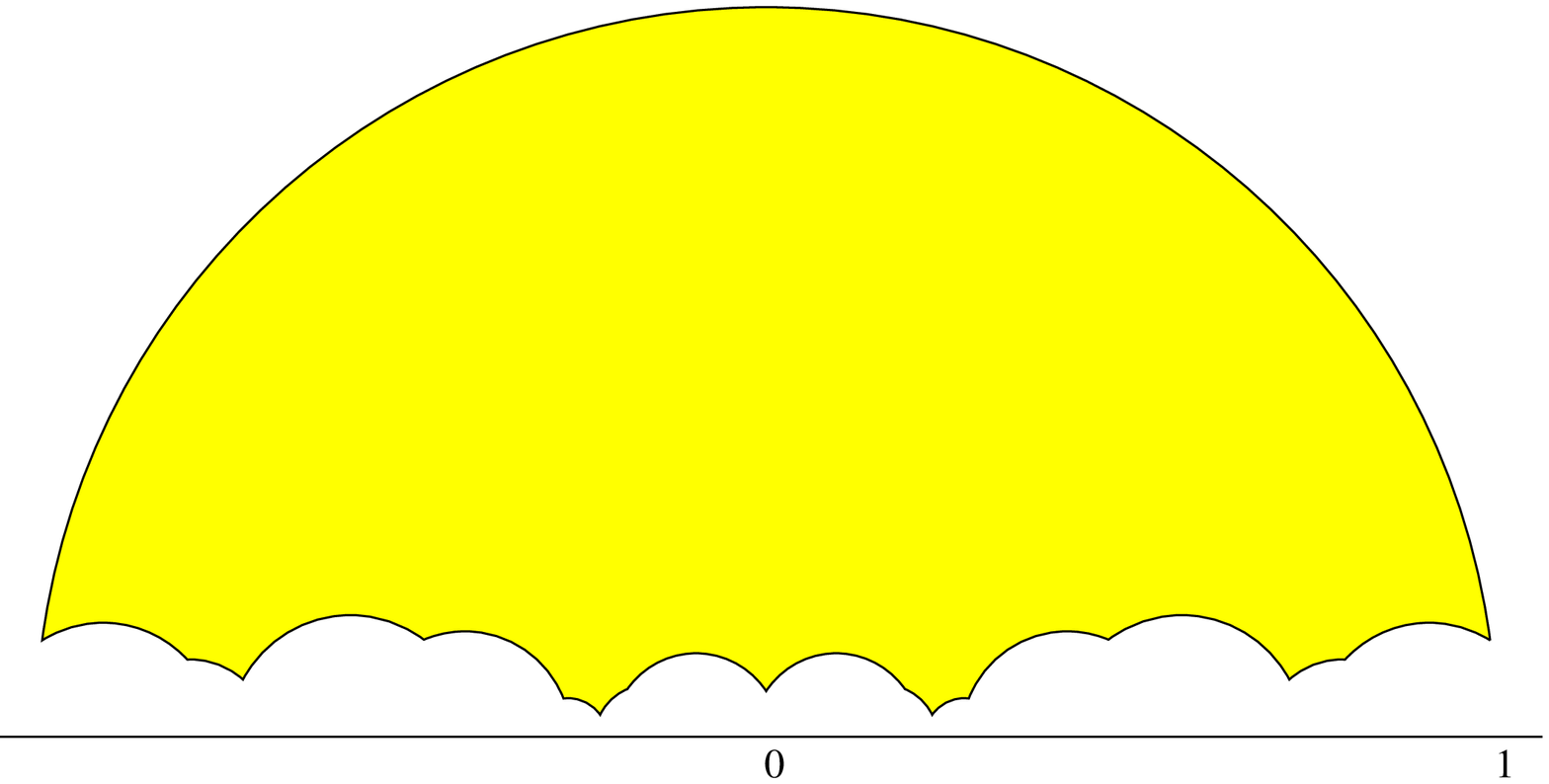} \\
\textbf{Figure \ref{examples}.2}: A Dirichlet domain for the arithmetic Fuchsian group $\Gamma$ \\
associated to a quaternion algebra over the minimal septic totally real field
\end{center}
\end{figure}

We conclude by noting that it would be interesting to extend the methods in this paper to other arithmetic groups; this would allow the computation of unit groups for a wider range of quaternion algebras over number fields and would have further consequences for the algorithmic theory of Shimura varieties.


\end{document}